\DeclareMathOperator*{\argmin}{\arg\min}
\theoremstyle{plain}
\newtheorem{theorem}{Theorem}
\newtheorem{corollary}{Corollary}
\newtheorem{lemma}{Lemma}
\newtheorem{proposition}{Proposition}
\theoremstyle{definition}
\newtheorem{assumption}{Assumption}
\newtheorem{definition}{Definition}
\theoremstyle{remark}
\newtheorem{remark}{Remark}
\title{A Nonmonotone Front Descent Method for Bound-Constrained Multi-Objective Optimization}
\author{
	\href{https://orcid.org/0000-0002-1394-0937}{\includegraphics[scale=0.06]{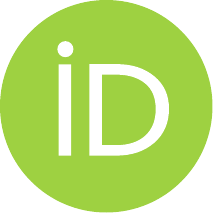}\hspace{1mm}Pierluigi Mansueto} \\
	Global Optimization Laboratory (GOL) \\
	Department of Information Engineering \\
	University of Florence \\
	Via di Santa Marta, 3, 50139, Florence, Italy \\
	\texttt{pierluigi.mansueto@unifi.it} \\
}
\begin{document}
\maketitle

\begin{abstract}
	We introduce a nonmonotone extension of the Front Descent framework for multiobjective optimization. The method uses novel nonmonotone line searches that allow temporary increases in some objective functions. To our knowledge, this is the first descent algorithm employing nonmonotone strategies to generate point sets approximating the Pareto front. We establish convergence properties for the resulting sequences of sets, analogous to the original framework, and present numerical results confirming the approach’s consistency in the bound-constrained setting.
\end{abstract}

\keywords{Multi-objective optimization \and Pareto front \and Non-monotone line search}
\MSCs{90C29 \and 90C30}

\section{Introduction}

In this paper, we consider multi-objective optimization problems of the form  
\begin{equation}
	\label{eq::mo_prob}
	\min_{x\in[l, u]} F(x) = (f_1(x),\ldots,f_m(x))^\top ,
\end{equation}
where $F:\mathbb{R}^n\to \mathbb{R}^m$ is continuously differentiable, and $l, u \in \mathbb{R}^n$ are lower and upper bounds defining a (possibly infinite) box constraint. We denote the feasible set as $\Omega = [l, u]$. Since multiple objectives must be minimized simultaneously, we rely on the Pareto optimality concepts; the reader is referred to \cite{EICHFELDER2021100014} for an introduction to multi-objective optimization.  

Among the main approaches to these problems that have attracted increasing attention in recent years, descent methods \cite{Fliege2000, Drummond2004, Fliege2009, Prudente2024, Lapucci2023} extend classical scalar optimization algorithms to the multi-objective setting. Originally introduced as \textit{single-point} methods, i.e., designed to approximate individual optimal solutions, they have been later generalized to compute sets of points and approximations of the Pareto frontier (see, e.g., \cite{Cocchi2020, COCCHI2021100008}), becoming competitive alternatives to scalarization \cite{pascoletti1984scalarizing, eichfelder2009adaptive} and evolutionary methods \cite{Deb2002}.  

On the other side, nonmonotone strategies, well established in scalar optimization \cite{Grippo1986, Zhang2004}, have only recently been explored in the multi-objective context \cite{Mita2019, QU2017356, Fazzio2019}. Within nonmonotone line search methods, sufficient decrease conditions are imposed relative to benchmarks (e.g., the maximum objective values obtained over the last $M > 0$ iterations). Such conditions permit temporary increases in (a subset of) the objective functions, often leading to faster overall convergence and enhanced efficiency compared to purely monotone schemes (see, e.g., \cite{Mita2019}). To date, nonmonotone variants of descent methods approximating the Pareto frontier remain unexplored. Adapting nonmonotone techniques to \textit{front-based} algorithms is indeed nontrivial, as suitable benchmarks for line searches must be defined. 

In this work, we then introduce a nonmonotone extension of the Front Descent method \cite{LAPUCCI2023242}, for which novel convergence results concerning the sequences of sets generated by the procedure have recently been established \cite{lapucci2025effectivefrontdescentalgorithmsconvergence}. We propose modifications that enable controlled increases in objective values within sets, while preserving desirable convergence properties consistent with both those proposed in \cite{lapucci2025effectivefrontdescentalgorithmsconvergence} and classical nonmonotone theory.  

The paper is organized as follows. Section \ref{sec::fd} reviews the Front Descent method and its convergence theory. Section \ref{sec::fd_nmt} introduces our novel nonmonotone variant for bound-constrained multi-objective optimization, along with a detailed analysis and convergence results. Section \ref{sec::exp} reports some numerical experiments, and Section \ref{sec::conclusions} provides concluding remarks.

\section{The Front Descent Framework}
\label{sec::fd}

The \textit{Front Descent} (\texttt{FD}) framework \cite{lapucci2025effectivefrontdescentalgorithmsconvergence} is originally designed for \textit{unconstrained} multi-objective problems. For the sake of exposition, we introduce here a variant, \textit{Front Projected Descent} (\texttt{FPD}), which incorporates modifications to handle the bounding box of problem \eqref{eq::mo_prob}. Importantly, these modifications preserve the convergence properties of the original framework.  

Given the standard partial ordering in $\mathbb{R}^m$, i.e.,
\begin{gather*}
	u\le v \iff u_j\le v_j, \;\forall\,j=1,\ldots,m,\\
	u< v \iff u_j< v_j, \;\forall\,j=1,\ldots,m,\\
	u \lneqq v\iff u\le v \land u\neq v,
\end{gather*}
\texttt{FPD} aims to find solutions $\bar{x}\in\Omega$ that are:
\begin{itemize}
	\item \textit{Pareto optimal:} $\nexists\,y\in\Omega$ s.t. $F(y)\lneqq F(\bar{x})$; 
	\item \textit{Weak Pareto optimal:} $\nexists\,y\in\Omega$ s.t. $F(y)< F(\bar{x})$;  
	\item \textit{Pareto stationary:} $\min\limits_{d\in\mathcal{D}(\bar{x})}\max\limits_{j=1,\dots,m}\nabla f_j(\bar{x})^\top  d = 0$, where $\mathcal{D}(\bar{x}) = \{v\in \mathbb{R}^n \mid \exists \bar{t}>0: \bar{x}+tv \in \Omega \ \forall t \in [0, \bar{t}\,]\}$ is the set of feasible directions at $\bar{x}$.
\end{itemize}
Pareto optimality is the strongest property, implying the others. Pareto stationarity, instead, is only a necessary condition for weak Pareto optimality, becoming sufficient under convexity assumptions. Since minimizing all objectives simultaneously is generally impossible, multiple Pareto optimal solutions (constituting the \textit{Pareto set}) usually arise, forming in the objectives space the \textit{Pareto front}. Decision makers can evaluate these trade-offs \textit{a posteriori} to select the most suitable solutions. 

To approximate the entire Pareto front, \texttt{FPD} iteratively updates a set $X^k$ of mutually nondominated solutions, i.e., no $y\in X^k$ exists such that $F(y)\lneqq F(x)$ for $x\in X^k$. The next set $X^{k+1}$ is obtained by search steps from $\hat{x}\in X^k$ along:
\begin{itemize}
	\item the \textit{projected common descent direction} \cite{Drummond2004}: 
	\begin{equation}
		\label{eq::projected_common}
		v(\hat{x}) = \argmin\limits_{\substack{d\in\mathbb{R}^n\\\hat{x}+d\in\Omega}}\max\limits_{j=1,\ldots,m} \nabla f_j(\hat{x})^\top d + \frac{1}{2}\|d\|^2;
	\end{equation}
	\item the \textit{projected partial descent directions} \cite{Drummond2004, Cocchi2020}: for subset $I\subset \{1,\ldots,m\}$, 
	\begin{equation}
		\label{eq::projected_partial}
		v^I(\hat{x}) = \argmin\limits_{\substack{d\in\mathbb{R}^n\\\hat{x}+d\in\Omega}}\;\max\limits_{j\in I}\;\nabla f_j(\hat{x})^\top d + \frac{1}{2}\|d\|^2.
	\end{equation}
\end{itemize}
Both problems admit unique solutions due to strong convexity and continuity of the objectives, and convexity of the feasible sets. Their optimal values, $\theta(\hat{x})$ and $\theta^I(\hat{x})$, are negative whenever a descent direction exists. Naturally, $v^I$ and $\theta^I$ coincide with $v$ and $\theta$, respectively, when $I = \{1,\ldots, m\}$.
Moreover, as established in \cite{Drummond2004}, $\theta$ is continuous and $v$ satisfies
\begin{equation}
	\label{eq::bound_v}
	\|v(x)\| \le 2\|J_F(x)\|,
\end{equation}
where $J_F$ is the Jacobian of $F$. A point $\bar{x}$ is Pareto stationary if and only if $\theta(\bar{x})=0$ and $v(\bar{x})=0$. Note then that it always holds that $\theta(x)\le 0$. Finally, defining $D(x,d)=\max_{j=1,\ldots,m}\nabla f_j(x)^\top d$, we have $J_F(x)v(x)\le \mathbf{1}D(x,v(x))\le \mathbf{1}\theta(x)$, with $\mathbf{1}$ being the vector of all ones in $\mathbb{R}^m$.

The \texttt{FPD} scheme is presented in Algorithm~\ref{alg::FPD}. For clarity, three key phases (steps~\ref{step::createhatXk}-\ref{step::refinearmijo}-\ref{step::createXk+1}), crucial for our nonmonotone variant, are left in a general form. In \texttt{FPD}, these are handled as follows: in step~\ref{step::createhatXk}, we set $\hat{X}^k = X^k$; in step~\ref{step::refinearmijo}, we perform the classical Armijo-type line search for multi-objective optimization \cite{Fliege2000}, i.e., $\alpha_p^k = \max_{h\in\mathbb{N}} \{\alpha_0\delta^h\mid F(x_p+\alpha_0\delta^hv(x_p))\le F(x_p)+\mathbf{1}\gamma\alpha_0\delta^hD(x_p, v(x_p))\}$; finally, in step~\ref{step::createXk+1}, we set $X^{k+1} = \hat{X}^k$.

\SetInd{1ex}{1ex}
\begin{algorithm}[!h]
	\caption{\textit{Front Projected Gradient}} 
	\label{alg::FPD}
	Input: $F:\mathbb{R}^n \rightarrow \mathbb{R}^m$, $X^0 \subset [l, u]$ set of mutually nondominated points w.r.t.\ $F$, $\alpha_0\in(0, 1],$ $\delta\in(0,1),\gamma\in(0,1)$, $\{\sigma_k\} \subseteq \mathbb{R}_+$.\\
	$k = 0$\\
	\While{a stopping criterion is not satisfied}
	{   
		Create the set $\hat{X}^k$ based on $X^k$\label{step::createhatXk}\\
		\ForAll{$x_p\in X^k$ \label{step::start_for}}
		{
			\If{$\nexists y \in \hat{X}^k \text{ s.t.\ } F(y) \lneqq F(x_p)$ \label{step::nondominance_xc}}
			{
				\If{$\theta(x_p)<-\sigma_k$\label{step::iftheta}}
				{   
					Run an Armijo-type line search to get $\alpha_p^k$\label{step::refinearmijo}
				}
				\Else{
					$\alpha_p^k=0$
				}
				$z_p^k = x_p+\alpha_p^kv(x_p)$\label{step::zck}\\
				$\hat{X}^k = (\hat{X}^k \cup \{z^k_p\}) \setminus \{y \in \hat{X}^k \mid F(z^k_p) \lneqq F(y)\}$\label{step::hatXk_1_add}\\
				\ForAll{$I\subset\{1,\ldots,m\}$ s.t.\ $\theta^I(z_p^k) < 0$\label{step::start_second_phase}}
				{
					\If{$z_p^k\in\hat{X}^k$}
					{
						$\alpha_p^I$ = $\max_{h\in\mathbb{N}} \{\alpha_0\delta^h\mid\forall y\in\hat{X}^k,\;\exists j \in \{1,\ldots,m\}: f_j(z_p^k+\alpha_0\delta^h v^I(z_p^k)) < f_j(y)\}$\label{step::2_line_search}\\
						$\hat{X}^k = (\hat{X}^k \cup \{z_p^k + \alpha_p^I v^I(z_p^k)\}) \setminus \{y \in \hat{X}^k \mid F(z_p^k + \alpha_p^I v^I(z_p^k)) \lneqq F(y)\} $\label{step::hatXk_2_add}
					}
				}
			}
		}\label{step::end_for}
		Create the set $X^{k+1}$ based on $\hat{X}^k$ \label{step::createXk+1}\\
		$k=k+1$\\
	}
	\Return{$X^k$}
\end{algorithm}

At each iteration, \texttt{FPD} updates a list of solutions $X^k$ by processing each (non-dominated) point $x_p \in X^k$ through a two-phase optimization procedure:
\begin{itemize}
	\item Phase 1 (steps \ref{step::iftheta}-\ref{step::hatXk_1_add}) acts as a refinement step, consisting of a single-point projected descent from $x_p$. This yields a new point $z_p^k$ that provides a sufficient decrease in all objective functions; if $x_p$ is already approximately Pareto-stationary (with approximation $\sigma_k$), no refinement is performed and $z_p^k = x_p$.
	\item Phase 2 (steps \ref{step::start_second_phase}-\ref{step::hatXk_2_add}) enriches $X^k$ with new nondominated points to explore the objectives space further. Starting at $z_p^k$, projected partial descent steps are taken along directions with $\theta^I(z_p^k) < 0$, as long as $z_p^k$ is not dominated. The line search considers the entire updated set $\hat{X}^k$ and must produce a point not dominated by any current solution.
\end{itemize}
Whenever a new point is added (steps \ref{step::hatXk_1_add}-\ref{step::hatXk_2_add}), dominated points are removed via a filtering procedure.

\texttt{FPD} has finite termination properties, as established in \cite[Lemmas 5.1 and 5.2]{lapucci2025effectivefrontdescentalgorithmsconvergence}. The approximation degree for Pareto stationarity is governed by a sequence $\{\sigma_k\} \subseteq \mathbb{R}_+$, which allows formally addressing both the case $\sigma_k \to 0$ and the more practical scenario where a constant $\sigma > 0$ is used for all $k$.

\begin{remark}
	By the definitions of the projected directions \eqref{eq::projected_common}-\eqref{eq::projected_partial} and of $\alpha_0 \in (0,1]$, it is straightforward to see that, if $X^0$ contains only feasible solutions, all points generated by \texttt{FPD} remain within the bounding box $[l, u]$, and all the convergence properties listed below are preserved.
\end{remark}

The convergence properties of \texttt{FPD}, concerning the sequence of sets of generated points, are based on the following two definitions and technical lemma.

\begin{definition}[{\cite[Definition 5.11]{lapucci2025effectivefrontdescentalgorithmsconvergence}}]
	Let ${\zeta}\in\mathbb{R}^m$ be a reference point and let $Y\subseteq \mathbb{R}^m$ be a (possibly infinite) set of points. We define the \textit{dominated region} as 
	$\Lambda(Y) = \left\{y\in\mathbb{R}^m\mid \exists\, \bar{y}\in Y:\; \bar{y}\le y\le {\zeta}\right\}.$
	In addition, the \textit{hyper-volume} \cite{zitzler98} associated to $Y$ is defined as the volume (or the Lebesgue measure) of the set $\Lambda(Y)$ and is denoted by $V(Y)$.
\end{definition}

It should be noted that a ``better'' approximation of the Pareto frontier is generally associated with a higher hyper-volume value. For the sake of simplicity, given $X \subset \Omega$, we will use the notation $\Lambda_F(X) = \Lambda(F(X))$ and $V_F(X) = V(F(X))$, with $F(X)$ being the image of $X$ through $F$. If $X$ only contains mutually non-dominated points, $F(x)$ is called \textit{stable set}.

\begin{lemma}[{\cite[Lemma 5.12]{lapucci2025effectivefrontdescentalgorithmsconvergence}}]
	\label{lem::5.12}
	Let $\zeta\in\mathbb{R}^m$ be a reference point and let $X$ be a set of points such that $Y = F(X)$ is a stable set and, for all $y\in Y$, $y\le \zeta$. Let $\bar{x}\in X$ and $\mu\in\Omega$ s.t.\ $F(\mu)<F(\bar{x})$. Then, the set $Z = F(X\cup\{\mu\}\setminus\{\bar{x}\})$ is such that $V(Z)-V(Y)\ge \prod_{j=1}^{m}(f_j(\bar{x})-f_j(\mu))>0.$
\end{lemma}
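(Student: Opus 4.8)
The plan is to prove the hyper-volume increase by direct geometric reasoning about the dominated regions $\Lambda(Y)$ and $\Lambda(Z)$. First I would observe that since $Y=F(X)$ is stable and $F(\mu)<F(\bar x)$, the point $F(\mu)$ is not dominated by any element of $Y\setminus\{F(\bar x)\}$: indeed if some $F(x')\le F(\mu)$ for $x'\in X\setminus\{\bar x\}$, then $F(x')\le F(\mu)<F(\bar x)$ would contradict stability of $Y$. Hence $Z=F(X\cup\{\mu\}\setminus\{\bar x\})$ is again a stable set, and every $z\in Z$ satisfies $z\le\zeta$ (for the old points this is assumed; for $F(\mu)$ it follows from $F(\mu)<F(\bar x)\le\zeta$). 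So both $\Lambda(Y)$ and $\Lambda(Z)$ are well-defined regions inside the box $(-\infty,\zeta]$.

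Next I would show the containment $\Lambda(Y)\subseteq\Lambda(Z)$. Take $y\in\Lambda(Y)$, so there is $\bar y\in Y$ with $\bar y\le y\le\zeta$. If $\bar y\ne F(\bar x)$ then $\bar y\in Z$ as well, so $y\in\Lambda(Z)$. If $\bar y=F(\bar x)$, then since $F(\mu)<F(\bar x)=\bar y\le y\le\zeta$ and $F(\mu)\in Z$, we again get $y\in\Lambda(Z)$. Therefore $V(Z)\ge V(Y)$, and more importantly $V(Z)-V(Y)$ equals the Lebesgue measure of $\Lambda(Z)\setminus\Lambda(Y)$.

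The key step is then to exhibit inside $\Lambda(Z)\setminus\Lambda(Y)$ a set of measure at least $\prod_{j=1}^m\big(f_j(\bar x)-f_j(\mu)\big)$. The natural candidate is the box
\[
B=\{y\in\mathbb{R}^m\mid F(\mu)\le y\le F(\bar x)\},
\]
which has volume exactly $\prod_{j=1}^m(f_j(\bar x)-f_j(\mu))$, a strictly positive quantity since $F(\mu)<F(\bar x)$ coordinatewise. I would check that $B\subseteq\Lambda(Z)$: every $y\in B$ satisfies $F(\mu)\le y\le F(\bar x)\le\zeta$ with $F(\mu)\in Z$. Then I would argue $B\cap\Lambda(Y)$ has measure zero — in fact I claim $B\cap\Lambda(Y)$ is contained in the lower boundary face $\{y\in B\mid y_j=f_j(\bar x)\text{ for some }j\}$. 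Indeed, if $y\in B$ also lies in $\Lambda(Y)$, there is $\bar y\in Y$ with $\bar y\le y\le F(\bar x)$; by stability of $Y$ this forces $\bar y=F(\bar x)$ (any other $\bar y\in Y$ with $\bar y\le F(\bar x)$ would violate non-dominance unless equal), hence $F(\bar x)\le y$, which combined with $y\le F(\bar x)$ gives $y=F(\bar x)$, a single point of measure zero. Thus
\[
V(Z)-V(Y)\ge \mathrm{vol}(B\setminus\Lambda(Y))=\mathrm{vol}(B)=\prod_{j=1}^m\big(f_j(\bar x)-f_j(\mu)\big)>0.
\]

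I expect the main subtlety to be the handling of stability when $Y$ may contain several points other than $F(\bar x)$: one must be careful that removing $F(\bar x)$ and adding $F(\mu)$ does not cause $\Lambda$ to lose any mass, and that no other member of $Y$ already "covers" part of $B$. This is exactly where stability (mutual non-domination) of $Y$ is used — it guarantees that the only element of $Y$ lying below $F(\bar x)$ is $F(\bar x)$ itself, so $B$ (minus a null set) was genuinely outside $\Lambda(Y)$ and is genuinely inside $\Lambda(Z)$. Everything else is elementary measure theory on axis-aligned boxes.
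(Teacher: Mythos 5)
Your argument is correct and follows the same route as the source of this result (the paper itself only cites \cite[Lemma 5.12]{lapucci2025effectivefrontdescentalgorithmsconvergence} without reproducing the proof): show $\Lambda(Y)\subseteq\Lambda(Z)$, then exhibit the box $B=\{y\mid F(\mu)\le y\le F(\bar x)\}\subseteq\Lambda(Z)$ whose intersection with $\Lambda(Y)$ is Lebesgue-null by stability of $Y$, giving $V(Z)-V(Y)\ge \mathrm{vol}(B)=\prod_{j=1}^m(f_j(\bar x)-f_j(\mu))>0$. One small caveat: your side remark that $Z$ is again a stable set is false in general, since $F(\mu)$ may dominate other points of $Y\setminus\{F(\bar x)\}$ (e.g., $Y=\{(0,1),(1,0)\}$, $F(\bar x)=(1,0)$, $F(\mu)=(-1,-1)$); this claim is never used in your argument, so the proof of the stated inequality is unaffected.
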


\begin{definition}[{\cite[Definition 5.13]{lapucci2025effectivefrontdescentalgorithmsconvergence}}]
	Let $\mathcal{X}$ be the set of all sets  $X\subseteq\Omega$ of mutually nondominated points w.r.t.\ $F$, i.e., $X\in \mathcal{X}$ if $F(X)$ is a stable set. We define the map $\Theta:\mathcal{X}\to\mathbb{R}$ as
	$\Theta(X) = \inf_{x\in X}\theta(x).$
\end{definition}

The last definition introduces a Pareto-stationarity measure for sets of mutually nondominated points. For finite sets $X$, the infimum is a minimum, so $\Theta(X)$ returns the common projected descent value $\theta$ of the ``least'' Pareto-stationary points in $X$. By construction, $\Theta(X)$ is always nonpositive and equals zero if and only if all points in $X$ are Pareto-stationary.

Before stating the convergence properties, we introduce the following assumption, which will also be essential for the theoretical analysis of our nonmonotone variant.

\begin{assumption}
	\label{ass:order}
	At each iteration $k$ of Algorithm \ref{alg::FPD}, the first point to be processed in the for loop of steps \ref{step::start_for}-\ref{step::end_for} belongs to the set $\argmin_{x\in X^k}\theta(x)$. 
\end{assumption}

\begin{proposition}[{\cite[Theorem 5.15, Corollary 5.18]{lapucci2025effectivefrontdescentalgorithmsconvergence}}]
	\label{prop:big_theta}
	Let $X^0$ be a set of mutually nondominated points and $x_0\in X^0$ be a point such that the set $\mathcal{L}(x_0) = \bigcup_{j=1}^{m}\{x\in\Omega\mid f_j(x)\le f_j(x_0)\}$ is compact.
	Let $\left\{X^k\right\}$ be the sequence of sets of nondominated points produced by \texttt{FPD} under Assumption \ref{ass:order}. 
	Then, 
	\begin{enumerate}
		\item[(i)] if $\sigma_k=\sigma>0$ for all $k$, there exists $\bar{k}$ such that $\Theta(X^{{k}})\ge -\sigma$ for all $k\ge \bar{k}$;
		\item[(ii)]  if $\sigma_k\to 0$, $\lim_{k\to \infty}\Theta(X^k) = 0$; moreover, letting $\left\{x^k\right\}$ be any sequence such that $x^k\in X^k$ for all $k$, it admits accumulation points and every accumulation point is Pareto-stationary for problem \eqref{eq::mo_prob}.
	\end{enumerate}
\end{proposition}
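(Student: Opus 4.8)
The plan is to use the hyper-volume $V_F(X^k)$, with respect to a fixed reference point, as a merit function for the sequence of sets: I will show it is nondecreasing along the whole execution of \texttt{FPD} and bounded above, while any iteration whose ``least stationary'' point is bounded away from Pareto stationarity forces a uniformly positive increase of $V_F$; the two facts are then combined. First I would fix the geometric setting. One checks that every point ever generated stays in the compact set $\mathcal{L}(x_0)$: at every stage the current set is mutually nondominated (each insert-and-filter at steps~\ref{step::hatXk_1_add}–\ref{step::hatXk_2_add} preserves stability, since the newly inserted point survives the nondominance test at step~\ref{step::nondominance_xc}, resp.\ the acceptance rule of the line search at step~\ref{step::2_line_search}), and it always contains a point $w$ with $F(w)\le F(x_0)$, because such a point can only be replaced by one with an even smaller image; hence no generated $x$ can satisfy $F(x)>F(x_0)$. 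Therefore I may fix once and for all a reference point $\zeta$ with $F(x)\le\zeta$ on $\mathcal{L}(x_0)$, so that each $\Lambda_F(X^k)$ lies in a fixed bounded box and $V_F(X^k)<\infty$; a compact set $\mathcal{K}\supseteq\mathcal{L}(x_0)$ containing all line-search trial points $x+\alpha_0\delta^h v(x)$ (these are feasible since $\alpha_0\in(0,1]$ and $\Omega$ is convex, and lie within distance $B:=2\max_{\mathcal{L}(x_0)}\|J_F\|$ of $\mathcal{L}(x_0)$ by \eqref{eq::bound_v}); and a modulus $\omega$ of uniform continuity of $\nabla F$ on $\mathcal{K}$. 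Finally, $V_F$ is nondecreasing because deleting a dominated point leaves $\Lambda$ unchanged while inserting a point can only enlarge it; since $X^{k+1}=\hat X^k$, the bounded sequence $\{V_F(X^k)\}$ converges, so $V_F(X^{k+1})-V_F(X^k)\to0$.

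The heart of the proof is the following volume-jump estimate: for every $\rho>0$ there is $c(\rho)>0$ such that, at any iteration $k$ at which the Armijo line search of Phase~1 is triggered at the first processed point $x_1^k$ (equivalently $\theta(x_1^k)=\Theta(X^k)<-\sigma_k$, by Assumption~\ref{ass:order}) and, moreover, $|\theta(x_1^k)|\ge\rho$, one has $V_F(X^{k+1})-V_F(X^k)\ge c(\rho)$. To see this, note first that the line search is well defined, because for small $\beta>0$ the negative quantity $(1-\gamma)\beta D(x_1^k,v(x_1^k))$ dominates the first-order remainder. The crucial point is that the accepted step $\alpha_1^k$ is bounded below by a constant $\underline\alpha(\rho)>0$: if $\alpha_1^k<\alpha_0$, the trial step $\alpha_1^k/\delta\le\alpha_0$ was rejected by some $f_j$; a mean-value expansion of that $f_j$, together with $\|v(x_1^k)\|\le B$, $\nabla f_j(x_1^k)^\top v(x_1^k)\le D(x_1^k,v(x_1^k))\le\theta(x_1^k)$, and the uniform continuity of $\nabla F$ on $\mathcal{K}$, yields $|\theta(x_1^k)|\le \tfrac{B}{1-\gamma}\,\omega\!\big((\alpha_1^k/\delta)B\big)$; since $|\theta(x_1^k)|\ge\rho$ and $\omega$ is nondecreasing with $\omega(0^+)=0$, inverting $\omega$ bounds $\alpha_1^k$ from below by a constant depending only on $\rho$. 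Then $z_1^k=x_1^k+\alpha_1^k v(x_1^k)$ satisfies, for all $j$, $f_j(x_1^k)-f_j(z_1^k)\ge-\gamma\alpha_1^k D(x_1^k,v(x_1^k))\ge\gamma\,\alpha_1^k\,|\theta(x_1^k)|\ge\gamma\,\underline\alpha(\rho)\,\rho$, so $F(z_1^k)<F(x_1^k)$ with a definite margin. At step~\ref{step::hatXk_1_add} for $p=1$ the working set is still $X^k$ and $z_1^k$ is inserted while $x_1^k$ is deleted; Lemma~\ref{lem::5.12} (any further points deleted there are dominated by $z_1^k$ and so do not affect $\Lambda$) together with the monotonicity of $V_F$ gives $V_F(X^{k+1})-V_F(X^k)\ge\prod_{j=1}^m\big(f_j(x_1^k)-f_j(z_1^k)\big)\ge(\gamma\,\underline\alpha(\rho)\,\rho)^m=:c(\rho)>0$.

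Both claims now follow by contradiction, using that $\sum_k\big(V_F(X^{k+1})-V_F(X^k)\big)<\infty$. For (i): if $\sigma_k\equiv\sigma$ and $\Theta(X^k)<-\sigma$, then $\theta(x_1^k)=\Theta(X^k)<-\sigma=-\sigma_k$ triggers the line search and $|\theta(x_1^k)|>\sigma$, so the estimate with $\rho=\sigma$ yields an increase $\ge c(\sigma)$; hence there are only finitely many such $k$, and $\Theta(X^k)\ge-\sigma$ for all $k$ beyond the last of them. For (ii): if $\Theta(X^k)$ did not tend to $0$, some $\rho>0$ would satisfy $\Theta(X^k)\le-\rho$ for infinitely many $k$; since $\sigma_k\to0$, for all large such $k$ we would have $\theta(x_1^k)=\Theta(X^k)\le-\rho<-\sigma_k$ and $|\theta(x_1^k)|\ge\rho$, producing infinitely many increments $\ge c(\rho)$, which is impossible. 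Thus $\Theta(X^k)\to0$. Finally, any sequence $\{x^k\}$ with $x^k\in X^k\subseteq\mathcal{L}(x_0)$ is bounded and hence has accumulation points; if $x^{k_i}\to\bar x$, then $0\ge\theta(x^{k_i})\ge\Theta(X^{k_i})\to0$, so by continuity of $\theta$ we get $\theta(\bar x)=0$, i.e.\ $v(\bar x)=0$ and $\bar x$ is Pareto stationary for \eqref{eq::mo_prob}.

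I expect the main obstacle to be the uniform lower bound $\underline\alpha(\rho)$ on the Armijo step-size: since $F$ is only $C^1$, no Lipschitz constant of $\nabla F$ is available, and one must instead argue with its modulus of continuity on a fixed compact set — which in turn is only legitimate once the invariance $X^k\subseteq\mathcal{L}(x_0)$ has been established; that same invariance also underpins the choice of a global reference point and the finiteness and monotonicity of $V_F$. A secondary point requiring care is that the volume jump credited to step~\ref{step::hatXk_1_add} is genuine and not an artifact of the filtering: the extra points removed there are dominated by $z_1^k$, hence contribute nothing to $\Lambda$, so the single-replacement bound of Lemma~\ref{lem::5.12} still applies verbatim.
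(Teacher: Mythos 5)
Your argument is correct, but it follows a genuinely different route from the one the paper relies on. The paper does not prove Proposition~\ref{prop:big_theta} at all: it imports it from \cite{lapucci2025effectivefrontdescentalgorithmsconvergence}, and the proof technique it actually exhibits (for the analogous Theorem~\ref{prop::convergence}) shares your hyper-volume merit function but finishes in a qualitatively different way. There, one assumes $\Theta$ stays below $-\bar\sigma$ along a subsequence, extracts convergent subsequences of the least-stationary points $x^k$, of $v(x^k)$ (bounded via \eqref{eq::bound_v}) and of the steps $\alpha_k$, uses the telescoping of the bounded nondecreasing hyper-volumes only to conclude $\alpha_k\to 0$, and then derives the contradiction in the limit through the failed Armijo inequality and \cite[Lemma 4]{Fliege2000} combined with continuity of $\theta$. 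You instead make the argument quantitative: a uniform lower bound $\underline\alpha(\rho)$ on the accepted Armijo step, obtained from the modulus of continuity of $\nabla F$ on a fixed compact neighborhood of $\mathcal{L}(x_0)$ (legitimate once the invariance $X^k\subseteq\mathcal{L}(x_0)$ is in place, which you establish by the same ``surviving witness $w$ with $F(w)\le F(x_0)$'' argument the paper uses), so that every iteration with $\Theta(X^k)\le-\rho$ produces a hyper-volume jump of at least $c(\rho)=(\gamma\underline\alpha(\rho)\rho)^m$, and a counting argument concludes. Your handling of the filtering (extra points removed at step~\ref{step::hatXk_1_add} are dominated by the retained $z_1^k$, so Lemma~\ref{lem::5.12} applies to the single replacement) is the right care point and is sound. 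What each approach buys: yours avoids any subsequence extraction and directly yields the full-sequence statements (i)--(ii), at the price of a uniform-continuity/step-size estimate that has no analogue in the paper; the compactness-and-limit argument avoids moduli of continuity entirely and is the one that transfers to the nonmonotone variant, where only subsequence conclusions are available.
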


\begin{remark}
	As explained in \cite{lapucci2025effectivefrontdescentalgorithmsconvergence}, Assumption \ref{ass:order} prevents that \texttt{FPD} repeatedly discard points before they undergo phase 1, which would otherwise slow progress toward stationarity despite continual, but insufficient improvements. Moreover, the assumption can arguably be considered non-restrictive, as the gradients and values of $\theta$ computed at the beginning of iteration $k$ to search for the $\argmin_{x \in X^k}\theta(x)$ can be stored in memory and reused later when processing the points in $X^k$, thus keeping the extra computational cost minimal.
\end{remark}

\section{A Nonmonotone Front Descent Algorithm}
\label{sec::fd_nmt}

The monotone variant of \texttt{FPD}, which we call \texttt{FPD\_NMT}, differs from the original algorithm in steps~\ref{step::createhatXk}--\ref{step::refinearmijo}--\ref{step::createXk+1}, which are detailed in Procedures~\ref{proc::createhatXk}--\ref{proc::refinearmijo}--\ref{proc::createXk+1}, respectively. The supplementary material also provides the complete \texttt{FPD\_NMT} scheme, corresponding to Algorithm~\ref{alg::FPD} with these three procedures integrated. Unlike \texttt{FPD}, where the new set $X^{k+1}$ is generated solely based on the previous iterate set $X^k$, \texttt{FPD\_NMT} performs the search steps with respect to a new \textit{reference set}.
\begin{definition}
	\label{def::reference_set}
	Let $k$ be an iteration of Algorithm \ref{alg::FPD}. We define the \textit{reference set} $C^k$ w.r.t.\ $X^k$ as a set of solutions such that: (a) $C^k$ contains only mutually nondominated points; (b) $\forall y \in C^k$, $\exists x \in X^k$ such that $F(x) \le F(y)$; (c) $\forall x \in X^k$, $\exists y \in C^k$ such that $F(x) \le F(y)$.
\end{definition}
The reference set is thus essentially a ``worse'' set than $X^k$ in terms of the objective functions: for each point in $C^k$, there exists a point in $X^k$ whose objective function values are equal to or better than those of the point in $C^k$.

\renewcommand{\algorithmcfname}{Procedure}
\setcounter{algocf}{0}

\SetInd{1ex}{1ex}
\begin{algorithm}[!h]
	\caption{Step \ref{step::createhatXk} of Algorithm \ref{alg::FPD} for \texttt{FPD\_NMT}}
	\label{proc::createhatXk}
	$X^{l(k)} \in \argmin_{X \in \{X^k, X^{k-1},\ldots, X^{k-\min\{k, M\}}\}}V_F(X)$\label{step::Xlk}\\
	$\bar{C}^k = \{x \in X^{l(k)} \cup C^{k-1} \mid \nexists y \in X^{l(k)} \cup C^{k-1}: F(y) \lneqq F(x)\}$\label{step::hatCk}\\
	$C^k = \bar{C}^k \cup \{x \in X^k \mid \forall y \in \bar{C}^k\ \exists j: f_j(y) < f_j(x)\}$\label{step::Ck}\\
	$\hat{X}^k = C^k$\label{step::init_hatXk}\\
\end{algorithm}

Starting from $C^{-1} = \emptyset$, at each iteration $k$ of the \texttt{FPD\_NMT} algorithm, $C^k$ is created via Procedure \ref{proc::createhatXk}. In step \ref{step::Xlk}, we select the set with the lowest hyper-volume among $X^k$ and the previous $M$ sets ($M \in \mathbb{N}$ is a parameter of \texttt{FPD\_NMT}); if $k < M$, only the $k$ previous sets are considered. In order to calculate the hyper-volume metric, we use the reference point $r_p$ such that, for all $j \in \{1,\ldots, m\}$, $(r_p)_j = \max_{x \in X_{\text{all}}} f_j(x)$, with $X_{\text{all}}$ being the union of all solutions generated up to iteration $k$. In step \ref{step::hatCk}, the chosen set $X^{l(k)}$ is merged with the previous reference set $C^{k-1}$, removing the dominated solutions, to form $\bar{C}^k$. Finally, in step \ref{step::Ck}, $C^k$ is obtained by adding to $\bar{C}^k$ the points from $X^k$ that are strictly worse in at least one objective w.r.t. each point in $\bar{C}^k$. Step \ref{step::init_hatXk} initializes $\hat{X}^k$ equal to $C^k$.

\SetInd{1ex}{1ex}
\begin{algorithm}[!h]
	\caption{Step \ref{step::refinearmijo} of Algorithm \ref{alg::FPD} for \texttt{FPD\_NMT}}
	\label{proc::refinearmijo}
	\If{$\exists c \in \hat{X}^k: F(x_p) \le F(c)$ \label{step::1_line_search_start}}
	{
		$\alpha_p^k = \max_{h\in\mathbb{N}} \{\alpha_0\delta^h\mid \exists c_p^k \in \hat{X}^k: F(x_p) \le F(c_p^k) \land F(x_p+\alpha_0\delta^hv(x_p))\le F(c_p^k)+\mathbf{1}\gamma\alpha_0\delta^hD(x_p, v(x_p))\}$\label{step::1_line_search_1var}
	}
	\Else{
		$\alpha_p^k = \max_{h\in\mathbb{N}} \{\alpha_0\delta^h\mid F(x_p+\alpha_0\delta^hv(x_p))\le F(x_p)+\mathbf{1}\gamma\alpha_0\delta^hD(x_p, v(x_p))\}$ \label{step::1_line_search_end}
	}
\end{algorithm}

We now describe Procedure \ref{proc::refinearmijo}, concerning the Armijo-type line search procedure at step \ref{step::refinearmijo} of Algorithm \ref{alg::FPD}. There are two cases: (i) there exists a solution $c \in \hat{X}^k$ that is equal to or worse than the current point $x_p$; note that, since $\hat{X}^k$ is initialized equal to $C^k$, if $C^k$ is a reference set this condition always holds for the first point $x_p \in X^k$ being processed; in this case, we run step \ref{step::1_line_search_1var} of the procedure, where the goal is to find a new point that achieves sufficient decrease in all the objective functions with respect to at least one point $c_p^k$ satisfying the condition at step \ref{step::1_line_search_start} (in fact, the point $c_p^k$ may not be unique); (ii) no such solution $c \in \hat{X}^k$ exists, because it was removed during filtering; in this case, the standard Armijo-type line search is applied (step \ref{step::1_line_search_end}).

\SetInd{1ex}{1ex}
\begin{algorithm}[!h]
	\caption{Step \ref{step::createXk+1} of Algorithm \ref{alg::FPD} for \texttt{FPD\_NMT}}
	\label{proc::createXk+1}
	$\bar{X}^{l(k)} \in \argmin\limits_{X \in \{\hat{X}^k\}\cup\{X^k, X^{k-1},\ldots, X^{k - \min\{k, M-1\}}\}}V_F(X)$\label{step::barXlk}\\
	$X^{k+1} = \{x \in \hat{X}^k \cup \bar{X}^{l(k)}\mid\nexists y \in \hat{X}^k \cup \bar{X}^{l(k)}: F(y) \lneqq F(x)\}$\label{step::Xk+1}\\
\end{algorithm}

After processing all points in $X^k$, the new set $X^{k+1}$ is generated via Procedure \ref{proc::createXk+1}. Let $\bar{X}^{l(k)}$ be the set with the lowest hyper-volume value among $\hat{X}^k$ and the previous $M-1$ sets; $X^{k+1}$ then consists of all mutually non-dominated points in $\hat{X}^k \cup \bar{X}^{l(k)}$. This procedure ensures that no point in $X^{k+1}$ is dominated by any solution contained in the ``worst'' set in terms of hyper-volume.

Through the following proposition and corollary, we show that constructing $C^k$ according to Procedures \ref{proc::createhatXk}-\ref{proc::createXk+1} ensures that $C^k$ forms a reference set for $X^k$ at each iteration $k$ of the \texttt{FPD\_NMT} algorithm.

\begin{proposition}
	\label{prop::reference_set}
	Let $k$ be an iteration of \texttt{FPD\_NMT} and $X^k$ be a set of mutually non-dominated points where, $\forall y \in C^{k-1} \cup X^{l(k)}$, $\exists x \in X^{k}$ such that $F(x) \le F(y)$. Thus, $C^k$ is a reference set w.r.t. $X^k$. Moreover, $X^{k+1}$ is a set of mutually non-dominated points where, $\forall y \in C^{k} \cup \bar{X}^{l(k)}$, $\exists x \in X^{k+1}$ such that $F(x) \le F(y)$.
\end{proposition}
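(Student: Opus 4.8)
The plan is to establish the two assertions in turn: first that $C^k$ satisfies conditions (a)--(c) of Definition~\ref{def::reference_set} relative to $X^k$, and then that $X^{k+1}$ from Procedure~\ref{proc::createXk+1} is mutually nondominated and ``covers from below'' the set $C^k\cup\bar X^{l(k)}$. A helpful preliminary observation (all sets involved being finite) is that the selection condition in step~\ref{step::Ck}, namely ``$\forall y\in\bar C^k\ \exists j: f_j(y)<f_j(x)$'', is precisely the negation of ``$\exists y\in\bar C^k: F(x)\le F(y)$''; thus $C^k=\bar C^k\cup A^k$ with $A^k=\{x\in X^k\mid \nexists y\in\bar C^k,\ F(x)\le F(y)\}$, and, by step~\ref{step::hatCk}, $\bar C^k$ is the nondominated subset of $X^{l(k)}\cup C^{k-1}$.

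For the first claim, conditions (b) and (c) are the routine part: a point of $\bar C^k$ lies in $X^{l(k)}\cup C^{k-1}$ and is hence covered by some point of $X^k$ via the hypothesis, whereas a point of $A^k$ is itself in $X^k$, giving (b); and for (c), given $x\in X^k$ either some $y\in\bar C^k$ satisfies $F(x)\le F(y)$ (with $\bar C^k\subseteq C^k$) or, by definition, $x\in A^k\subseteq C^k$. The substantive part is condition (a), mutual nondominance of $C^k$. Within $\bar C^k$ this is immediate (nondominated subset of a set), and within $A^k$ it follows from the mutual nondominance of $X^k$; a point of $A^k$ cannot dominate a point of $\bar C^k$, since that would in particular give $F(x)\le F(y)$, contradicting membership in $A^k$. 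The one delicate case, which I expect to be the main obstacle, is ruling out that some $y\in\bar C^k$ dominates some $x\in A^k$: here I would invoke the hypothesis to get $x'\in X^k$ with $F(x')\le F(y)$, chain $F(x')\le F(y)\lneqq F(x)$ to get $F(x')\lneqq F(x)$ (with $x'\neq x$, since $x'=x$ would force $F(x)\lneqq F(x)$), and contradict the mutual nondominance of $X^k$.

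For the second claim, mutual nondominance of $X^{k+1}$ is immediate from step~\ref{step::Xk+1}, which defines it as the nondominated subset of the finite set $\hat X^k\cup\bar X^{l(k)}$. For the covering property I would first record the standard fact that the nondominated subset of any finite set $S$ covers $S$ from below, i.e., for each $s\in S$ some nondominated $s'\in S$ has $F(s')\le F(s)$ (iterating $F(s_{i+1})\lneqq F(s_i)$ terminates by finiteness). Points $y\in\bar X^{l(k)}$ then lie in $\hat X^k\cup\bar X^{l(k)}$ and are covered by $X^{k+1}$ directly. For $y\in C^k$, the key step is a loop invariant: at every stage of the for loop of steps~\ref{step::start_for}--\ref{step::end_for} of Algorithm~\ref{alg::FPD}, $\hat X^k$ covers $C^k$ from below, i.e.\ $\forall y\in C^k$ there is $x\in\hat X^k$ with $F(x)\le F(y)$. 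This holds initially since $\hat X^k=C^k$ (step~\ref{step::init_hatXk}), and is preserved by the only modifications of $\hat X^k$, namely the add-and-filter updates of steps~\ref{step::hatXk_1_add} and \ref{step::hatXk_2_add}, each of the form $\hat X^k\leftarrow(\hat X^k\cup\{w\})\setminus\{v\in\hat X^k\mid F(w)\lneqq F(v)\}$ with the new point $w$ never filtered out: if a covering point $x$ of some $y\in C^k$ is removed, then $F(w)\lneqq F(x)\le F(y)$ and $w$ remains, still covering $y$. Applying the invariant at loop termination together with the nondominated-subset fact to $\hat X^k\cup\bar X^{l(k)}$ then produces a point of $X^{k+1}$ below each $y\in C^k$, which completes the argument.
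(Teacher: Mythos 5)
Your proposal is correct and follows essentially the same route as the paper: it verifies properties (a)--(c) of Definition~\ref{def::reference_set}, with the key case in (a) resolved by the same chain $F(x')\le F(y)\lneqq F(x)$ contradicting mutual nondominance of $X^k$, and handles the second claim via the filtering in step~\ref{step::Xk+1} plus the invariant that $\hat X^k$ always covers $C^k$ from below. Your version merely spells out a few steps the paper leaves implicit (the nondominated-subset covering fact and the preservation of the invariant under the add-and-filter updates), which is fine.
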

\begin{proof}
	First, we prove that $C^k$ satisfies the three properties defining a reference set (Definition \ref{def::reference_set}) one at a time.
	\begin{enumerate}
		\item[(a)] The set $C^k$ is created through steps \ref{step::hatCk}-\ref{step::Ck} of Procedure \ref{proc::createhatXk}. In step \ref{step::hatCk}, we get that $\bar{C}^k \subseteq X^{l(k)} \cup C^{k-1}$, with all the dominated solutions in $X^{l(k)} \cup C^{k-1}$ being filtered out. We then assume, by contradiction, that at step \ref{step::Ck} a point $\bar{x} \in X^k$ is added to $C^k$ and there exists $\bar{y} \in \bar{C}^k$ such that $F(\bar{y}) \lneqq F(\bar{x})$. Being $\bar{y} \in \bar{C}^k \subseteq X^{l(k)} \cup C^{k-1}$, we know by hypothesis that there exists $\tilde{x} \in X^k$ such that $F(\tilde{x}) \le F(\bar{y}) \lneqq F(\bar{x})$. We thus get a contradiction as $X^k$ is assumed to be a set of mutually non-dominated points.
		
		\item[(b)] By step \ref{step::Ck} of Procedure \ref{proc::createhatXk}, we distinguish two types of points: $y \in C^k \cap\bar{C}^k$ and $y \in C^k \cap X^k$. For the latter, we trivially get that $F(x) \le F(y)$, with $x = y \in X^k$. Thus, let us consider a point $y \in C^k \cap\bar{C}^k$. Since, by step \ref{step::hatCk}, $\bar{C}^k \subseteq X^{l(k)} \cup C^{k-1}$, by hypothesis there exists $x \in X^k$ such that $F(x) \le F(y)$.
		
		\item[(c)] By contradiction, we assume that there exists $x \in X^k$ such that, for all $y \in C^k$, there exists $j \in \{1,\ldots,m\}$ satisfying $f_j(y) < f_j(x)$.
		Thus, it must hold that $x \not\in C^k$, i.e., it cannot have been added at step \ref{step::Ck} of Procedure \ref{proc::createhatXk}. Therefore, it must exists $y \in C^k$ such that $F(x) \le F(y)$. This leads to a contradiction, thereby concluding the proof.
	\end{enumerate}
	We thus prove that $C^k$ is a reference set w.r.t.\ $X^k$.
	
	By step \ref{step::Xk+1} of Procedure \ref{proc::createXk+1}, we get that $X^{k+1} \subseteq \hat{X}^k \cup \bar{X}^{l(k)}$, with all the dominated points in $\hat{X}^k \cup \bar{X}^{l(k)}$ being filtered out; thus, $X^{k+1}$ is a set of mutually nondominated points. Now, we distinguish two cases: $y \in (\hat{X}^k \cup \bar{X}^{l(k)}) \cap X^{k+1}$ and $y \not\in (\hat{X}^k \cup \bar{X}^{l(k)}) \cap X^{k+1}$. In the first case, we trivially get that $F(x) \le F(y)$, with $x = y \in X^{k+1}$. In the second one, we know that there exists $x \in (\hat{X}^k \cup \bar{X}^{l(k)}) \cap X^{k+1}$ such that $F(x) \lneqq F(y)$. We can then conclude that 
	\begin{equation}
		\label{eq::Xhatk_le}
		\forall y \in \hat{X}^k \cup \bar{X}^{l(k)}, \ \exists x \in X^{k+1}: F(x) \le F(y).
	\end{equation}
	Let us now consider the set $C^k$ and $\hat{X}^k$. We observe that $\hat{X}^k$ is initialized as $C^k$ at step \ref{step::init_hatXk} of Procedure \ref{proc::createhatXk} and subsequently updated at steps~\ref{step::hatXk_1_add}–\ref{step::hatXk_2_add} of Algorithm \ref{alg::FPD}, where a filtering operation removes the solutions dominated by the newly added points. Thus, it follows that, throughout the entire iteration $k$, $\forall y \in C^k$ there exists $x \in \hat{X}^k$ such that $F(x) \le F(y)$. This result, together with \eqref{eq::Xhatk_le}, concludes the proof. 
\end{proof}

Before stating the corollary (whose proof is provided in the supplementary material), we first need to state an assumption.

\begin{assumption}
	\label{ass::same_set}
	For all iterations $k > 0$ of \texttt{FPD\_NMT}, the sets $\bar{X}^{l(k-1)}$ and $X^{l(k)}$ are equal.
\end{assumption}

\begin{corollary}
	\label{cor::induct_prop_features_C}
	Let Assumption \ref{ass::same_set} hold and $X^0$ be a set of mutually nondominated points. Then, Proposition \ref{prop::reference_set} holds for all iterations $k$ of \texttt{FPD\_NMT}.
\end{corollary}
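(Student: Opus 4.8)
The plan is to prove this by induction on the iteration counter $k$, using Proposition \ref{prop::reference_set} itself as the inductive engine: at each $k$ one checks that the hypotheses of Proposition \ref{prop::reference_set} are satisfied, and then its two conclusions are harvested — the first ($C^k$ is a reference set w.r.t.\ $X^k$) is exactly what the corollary asserts, while the second (the ``moreover'' clause on $X^{k+1}$) is precisely what is needed to feed the next step of the induction.

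For the base case $k=0$, I would first recall that $C^{-1}=\emptyset$ and that, at $k=0$, the selection in step \ref{step::Xlk} of Procedure \ref{proc::createhatXk} reduces to $X^{l(0)}\in\argmin_{X\in\{X^0\}}V_F(X)=\{X^0\}$, so that $C^{-1}\cup X^{l(0)}=X^0$. Since $X^0$ is assumed mutually nondominated, the condition ``$\forall y\in C^{-1}\cup X^{l(0)}$, $\exists x\in X^0$ with $F(x)\le F(y)$'' holds trivially with $x=y$; hence the hypotheses of Proposition \ref{prop::reference_set} are met for $k=0$, and applying it gives both that $C^0$ is a reference set w.r.t.\ $X^0$ and that $X^1$ is mutually nondominated with $\forall y\in C^0\cup\bar X^{l(0)}$, $\exists x\in X^1$ with $F(x)\le F(y)$.

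For the inductive step, assuming Proposition \ref{prop::reference_set} holds at iteration $k-1$, its ``moreover'' clause yields that $X^k$ is a set of mutually nondominated points and that for all $y\in C^{k-1}\cup\bar X^{l(k-1)}$ there is $x\in X^k$ with $F(x)\le F(y)$. I would then invoke Assumption \ref{ass::same_set}, which says $\bar X^{l(k-1)}=X^{l(k)}$, to rewrite this as: for all $y\in C^{k-1}\cup X^{l(k)}$ there is $x\in X^k$ with $F(x)\le F(y)$ — exactly the hypothesis of Proposition \ref{prop::reference_set} at iteration $k$. Applying Proposition \ref{prop::reference_set} at $k$ then closes both the statement at $k$ and the induction.

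The obstacle here is mild: the only delicate points are verifying that the boundary index conventions in Procedures \ref{proc::createhatXk}–\ref{proc::createXk+1} behave as expected at $k=0$ (so that $X^{l(0)}=X^0$ and $\bar X^{l(0)}$ is well defined), and — more to the point — recognizing that Assumption \ref{ass::same_set} is exactly the bridge converting the $\bar X^{l(k-1)}$-clause produced at step $k-1$ into the $X^{l(k)}$-clause demanded at step $k$; without it, these two ``worst-hyper-volume'' sets could differ and the induction would fail to propagate.
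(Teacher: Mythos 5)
Your proposal is correct and follows essentially the same route as the paper's proof: induction on $k$, with the base case using $C^{-1}=\emptyset$ and $X^{l(0)}=X^0$ so that the hypotheses of Proposition \ref{prop::reference_set} hold trivially at $k=0$, and the inductive step using Assumption \ref{ass::same_set} to turn the ``moreover'' clause at $k-1$ into the hypothesis at $k$. Your write-up merely makes explicit the bridging role of $\bar{X}^{l(k-1)}=X^{l(k)}$, which the paper leaves as ``straightforwardly follows''.
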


\begin{remark}
	Assumption \ref{ass::same_set} accounts for the possible non-uniqueness of minimum arguments in Procedures \ref{proc::createhatXk} and \ref{proc::createXk+1}. It is, however, non-restrictive because: (i) a set with the worst hyper-volume in Procedure \ref{proc::createXk+1} will keep this status in the next iteration of Procedure \ref{proc::createhatXk}, since only $X_{k+1}$ (step \ref{step::Xk+1} of Procedure \ref{proc::createXk+1}) can improve its hyper-volume; (ii) analogously, one can compute the minimal argument in Procedure \ref{proc::createXk+1} and reuse it in the following iteration of Procedure \ref{proc::createhatXk}.
\end{remark}

\begin{figure*}
	\centering
	\subfloat{\includegraphics[width=0.25\textwidth]{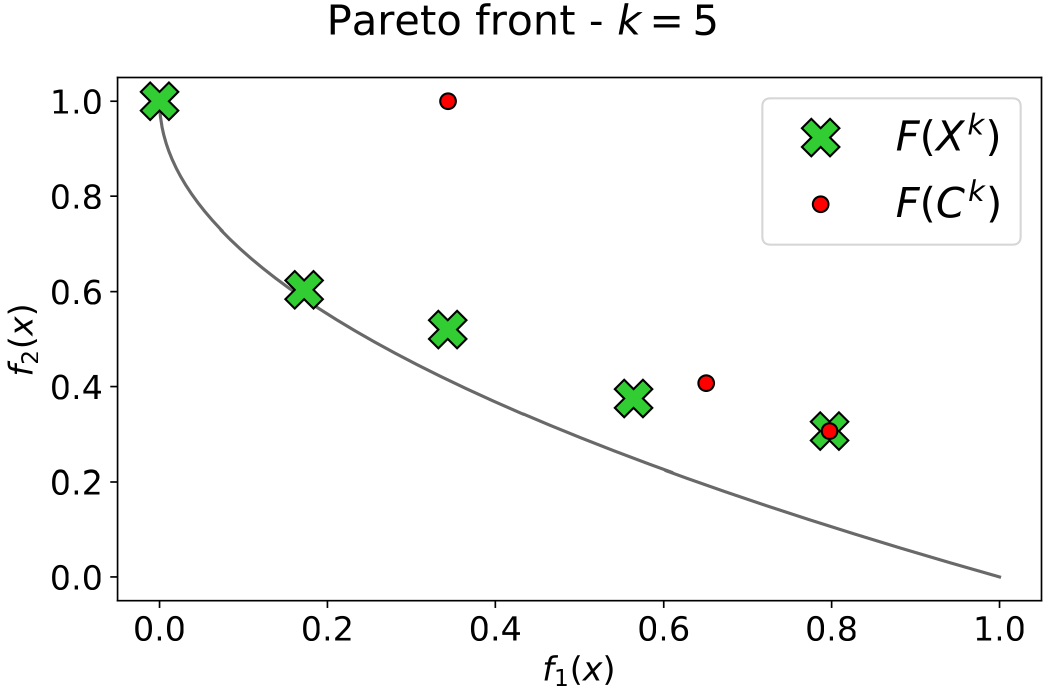}}
	\hfil
	\subfloat{\includegraphics[width=0.25\textwidth]{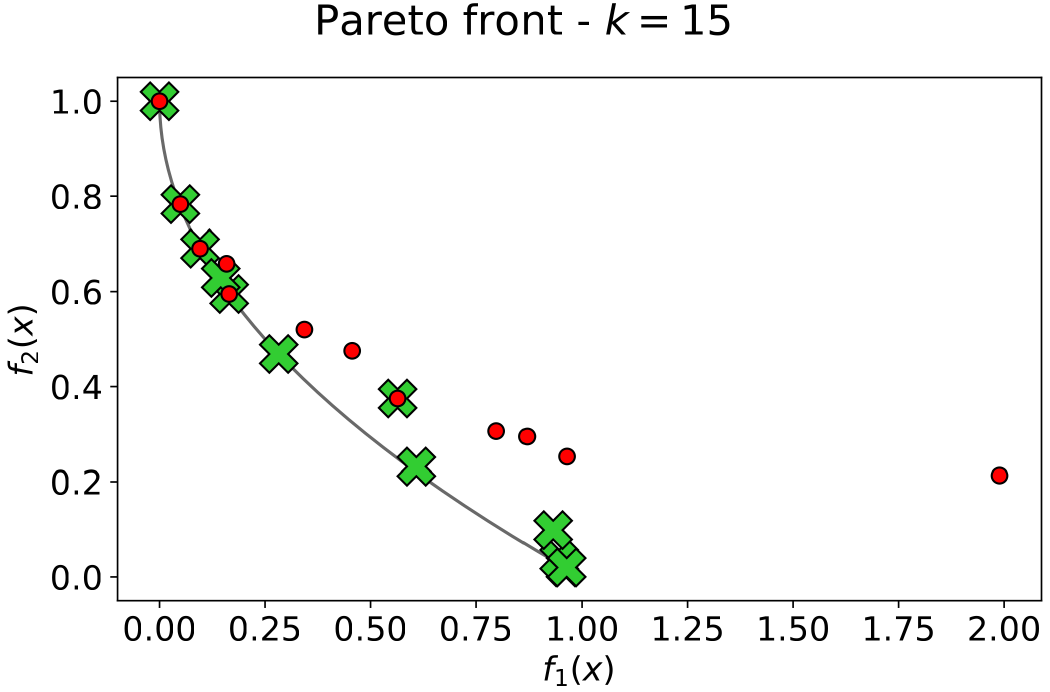}}
	\hfil
	\subfloat{\includegraphics[width=0.25\textwidth]{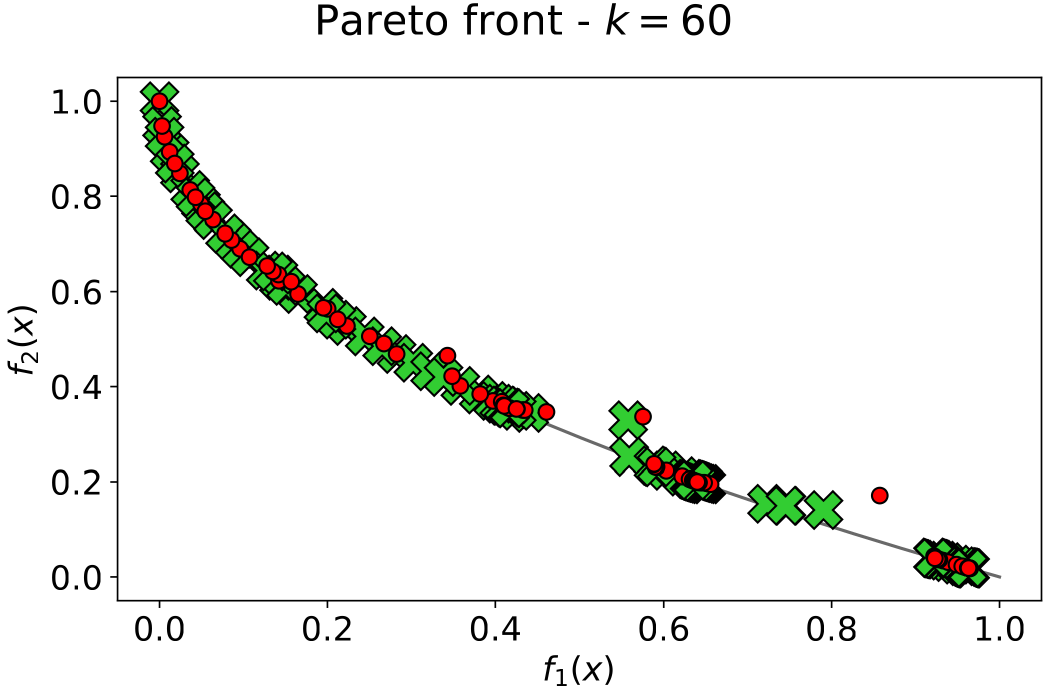}}
	\hfil
	\subfloat{\includegraphics[width=0.25\textwidth]{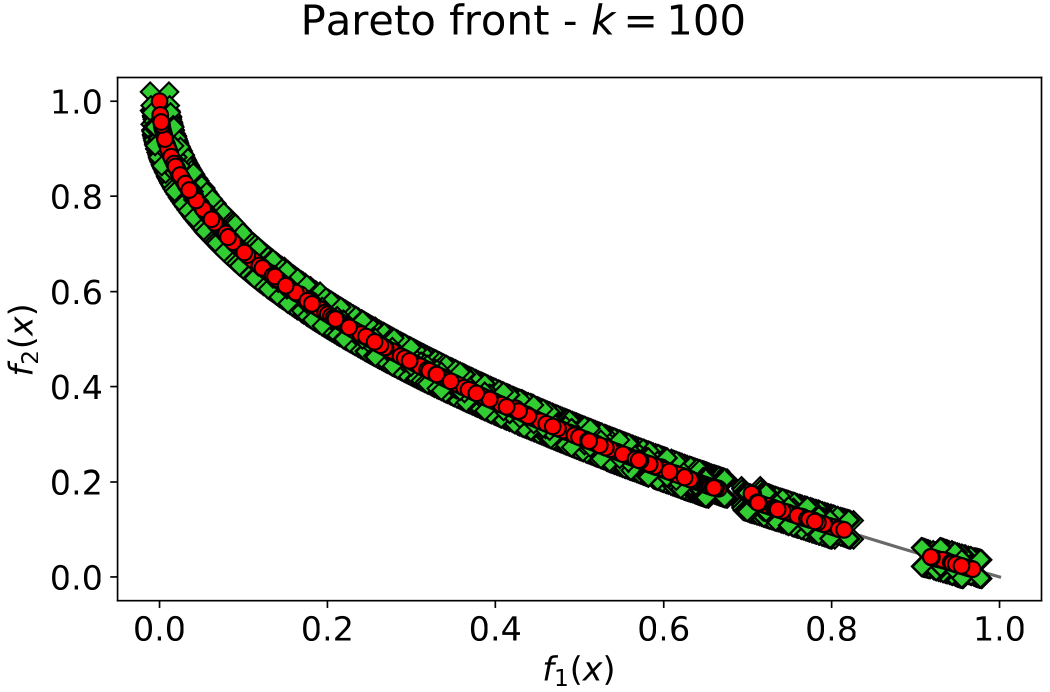}}
	\caption{Image sets of $X^k$ and $C^k$ through the mapping of $F$ for the CEC09\_2 problem \cite{zhang_multiobjective_2009} ($n=5$) obtained at iteration $k=5, 15, 60, 100$ by the \texttt{FPD\_NMT} algorithm with $M=10$.}
	\label{fig:flow}
\end{figure*}

In Figure~\ref{fig:flow}, we show the image sets of $X^k$ and $C^k$ under the mapping $F$, obtained at iterations $k = 5, 15, 60, \allowbreak 100$ for the CEC09\_2 problem \cite{zhang_multiobjective_2009} ($n=5$) using the \texttt{FPD\_NMT} algorithm with $M=10$. In the early iterations, the two image sets are distant, with $F(C^k)$ composed of points equal to or dominated by at least one point in $F(X^k)$. As the algorithm progresses, the distance between the two sets decreases. Using solutions in $C^k$ for the Armijo-type line search at step~\ref{step::1_line_search_1var} of Procedure~\ref{proc::refinearmijo} likely allows for larger steps compared to the classical procedure at step~\ref{step::1_line_search_end}, with some resulting points potentially worsening in terms of (subsets of) objective functions w.r.t.\ the current points in $X^k$. This approach is analogous to nonmonotone methods in scalar optimization: avoiding a monotone decrease of the objective functions may improve the overall convergence speed of the algorithm. Note that the line search in step~\ref{step::2_line_search} of Algorithm~\ref{alg::FPD} is also performed with respect to the solutions in $\hat{X}^k$, which is initialized as $C^k$ at each iteration $k$ of \texttt{FPD\_NMT}. Consequently, the selected step sizes are likely to be larger in this phase as well, facilitating a broader exploration of the feasible set.

We now state the finite termination properties of the algorithm and some technical lemmas that are crucial for establishing its convergence behavior. For brevity, the proofs of the following lemmas are provided in the supplementary material.

\begin{lemma}
	The line searches at steps \ref{step::1_line_search_1var}-\ref{step::1_line_search_end} of Procedure \ref{proc::refinearmijo} are well defined.
\end{lemma}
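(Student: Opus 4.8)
The plan is to reduce both line searches to the classical Armijo-existence result for continuously differentiable vector functions. First I would recall that Procedure~\ref{proc::refinearmijo} is only invoked when the outer test at step~\ref{step::iftheta} of Algorithm~\ref{alg::FPD} holds, i.e., $\theta(x_p) < -\sigma_k \le 0$; combined with $D(x_p, v(x_p)) \le \theta(x_p)$ this gives $D(x_p, v(x_p)) < 0$, so $v(x_p)$ is a common descent direction at $x_p$. By the standard argument (a first-order Taylor expansion of each $f_j$ around $x_p$, using $\gamma \in (0,1)$ and $F \in C^1$), there exists $\bar h \in \mathbb{N}$ such that, for every $h \ge \bar h$,
\[
	F\bigl(x_p + \alpha_0\delta^h v(x_p)\bigr) \le F(x_p) + \mathbf{1}\,\gamma\,\alpha_0\delta^h\, D(x_p, v(x_p)).
\]
This already shows that the set defining $\alpha_p^k$ in the \texttt{else} branch (step~\ref{step::1_line_search_end}) is nonempty; since it is contained in the discrete set $\{\alpha_0\delta^h : h \in \mathbb{N}\}$, which is bounded above by $\alpha_0$, the maximum is attained, and step~\ref{step::1_line_search_end} is well defined.

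For step~\ref{step::1_line_search_1var}, I would exploit the branch condition at step~\ref{step::1_line_search_start}: there exists $c \in \hat{X}^k$ with $F(x_p) \le F(c)$; fix one such point and denote it $c_p^k$. Then, for every $h \ge \bar h$ and every index $j \in \{1,\dots,m\}$,
\[
	f_j\bigl(x_p + \alpha_0\delta^h v(x_p)\bigr) \le f_j(x_p) + \gamma\,\alpha_0\delta^h\, D(x_p, v(x_p)) \le f_j(c_p^k) + \gamma\,\alpha_0\delta^h\, D(x_p, v(x_p)),
\]
where the first inequality is the displayed Armijo condition and the second uses $f_j(x_p) \le f_j(c_p^k)$. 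Hence every $\alpha_0\delta^h$ with $h \ge \bar h$ belongs to the set defining $\alpha_p^k$ at step~\ref{step::1_line_search_1var} (with witness $c_p^k$); this set is therefore nonempty and bounded above by $\alpha_0$, so its maximum is attained as well.

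The argument is essentially routine; the only point requiring a little care is observing that step~\ref{step::1_line_search_1var} merely \emph{relaxes} the right-hand side of the classical Armijo test, replacing $F(x_p)$ by the componentwise-not-smaller $F(c_p^k)$, so the threshold $\bar h$ coming from the $C^1$ expansion at $x_p$ works verbatim; and that the branch condition at step~\ref{step::1_line_search_start} is precisely what guarantees the existence of the benchmark $c_p^k$. No compactness or global assumption is needed here, since well-definedness is a purely local statement around $x_p$.
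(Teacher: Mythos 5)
Your proof is correct and follows essentially the same route as the paper's: both establish that $\theta(x_p)<-\sigma_k\le 0$ makes $v(x_p)$ a descent direction so the classical Armijo condition holds for all sufficiently small steps (the paper cites \cite[Lemma 4]{Fliege2000}, you rederive it via the first-order expansion), and then both handle step~\ref{step::1_line_search_1var} by the relaxation $F(x_p)\le F(c_p^k)$ guaranteed by the branch condition at step~\ref{step::1_line_search_start}.
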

\begin{lemma}
	Let Assumption \ref{ass::same_set} hold, $X^0$ be a set of mutually nondominated points and $k$ be an iteration of \texttt{FPD\_NMT} (Algorithm \ref{alg::FPD}). The set $\hat{X}^k$ contains nondominated points at any time; thus, step \ref{step::2_line_search} is always well defined.
\end{lemma}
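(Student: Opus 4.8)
The plan is to argue by induction on $k$ that $\hat X^k$ consists of mutually nondominated points at every point during iteration $k$, so that in particular the membership test ``$z_p^k \in \hat X^k$'' and the line search at step~\ref{step::2_line_search} always act on a stable set. First I would establish the base case: at the start of iteration $k$, $\hat X^k$ is initialized to $C^k$ at step~\ref{step::init_hatXk} of Procedure~\ref{proc::createhatXk}, and by Corollary~\ref{cor::induct_prop_features_C} (which applies since Assumption~\ref{ass::same_set} holds and $X^0$ is stable), $C^k$ is a reference set w.r.t.\ $X^k$; by property~(a) of Definition~\ref{def::reference_set}, $C^k$ contains only mutually nondominated points. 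Hence $\hat X^k$ starts stable.

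Next I would handle the inductive step within the iteration: $\hat X^k$ is only modified at steps~\ref{step::hatXk_1_add} and~\ref{step::hatXk_2_add} of Algorithm~\ref{alg::FPD}, each of which has the form $\hat X^k \leftarrow (\hat X^k \cup \{w\}) \setminus \{y \in \hat X^k \mid F(w) \lneqq F(y)\}$ for a new point $w$ (namely $w = z_p^k$ or $w = z_p^k + \alpha_p^I v^I(z_p^k)$). Assuming $\hat X^k$ is stable before the update, I must show it remains stable afterwards. No two of the old retained points dominate each other (they did not before, and the filtering only removes points). No retained old point $y$ is dominated by $w$, because any such $y$ was explicitly removed by the filter. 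The only thing left to rule out is that $w$ itself is dominated by some retained old point $y'$, i.e.\ $F(y') \lneqq F(w)$; here I would use the construction of $w$. For step~\ref{step::hatXk_1_add}: if a line search was performed, then $z_p^k$ achieves a strict decrease in all objectives relative to either $x_p$ (step~\ref{step::1_line_search_end}) or some $c_p^k \in \hat X^k$ with $F(x_p)\le F(c_p^k)$ (step~\ref{step::1_line_search_1var}); in the latter case, if some $y' \in \hat X^k$ had $F(y') \lneqq F(z_p^k)$, then $F(y') \lneqq F(z_p^k) < F(c_p^k)$, contradicting stability of $\hat X^k$ just before the update — and one must also note that $z_p^k$ is added only after passing the nondominance check at step~\ref{step::nondominance_xc} so the case $\alpha_p^k = 0$, i.e.\ $z_p^k = x_p$, is consistent since $x_p$ was not dominated by any $y \in \hat X^k$; I would also invoke the earlier lemma that the line searches of Procedure~\ref{proc::refinearmijo} are well defined so that these strict decreases actually hold. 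For step~\ref{step::hatXk_2_add}: the line search at step~\ref{step::2_line_search} is explicitly designed so that $z_p^k + \alpha_p^I v^I(z_p^k)$ is, in at least one objective, strictly better than every $y \in \hat X^k$, hence it cannot be dominated by any such $y$; the well-definedness of that line search is the content of the earlier lemma I am allowed to assume (circularity is avoided because that lemma's proof of existence of a feasible stepsize only needs $\hat X^k$ stable \emph{at that moment}, which is the inductive hypothesis here).

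Finally I would conclude: since $\hat X^k$ is stable at initialization and each modification preserves stability, $\hat X^k$ contains only mutually nondominated points throughout iteration $k$; in particular it is a finite stable set whenever step~\ref{step::2_line_search} is reached, and the condition defining $\alpha_p^I$ there — requiring, for each $y$ in this finite set, a strict improvement in some coordinate along $v^I(z_p^k)$ — is satisfiable for $h$ large enough because $\theta^I(z_p^k) < 0$ implies $D^I(z_p^k, v^I(z_p^k)) < 0$ and $z_p^k \in \hat X^k$ means $z_p^k$ is not strictly dominated by any member, so a sufficiently small step strictly decreases the objectives in $I$ at $z_p^k$ below $f_j(y)$ for the relevant coordinate. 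The main obstacle I anticipate is the bookkeeping in the inductive step — specifically, carefully treating the case $z_p^k = x_p$ (no refinement) together with the two branches of Procedure~\ref{proc::refinearmijo}, and making sure that the ``new point dominated by an old retained point'' scenario is excluded in every sub-case without accidentally relying on the very well-definedness statement being proved; isolating which facts are inductive hypotheses at each sub-step is where care is needed, but no deep new idea is required.
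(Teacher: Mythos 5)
Your proposal is correct and follows essentially the same route as the paper's proof: initialize via $C^k$ being a reference set (Corollary \ref{cor::induct_prop_features_C}), then show the updates at steps \ref{step::hatXk_1_add} and \ref{step::hatXk_2_add} preserve mutual nondominance by the same case analysis ($z_p^k=x_p$, the nonmonotone branch via $c_p^k$, the standard Armijo branch via $x_p$) with the transitivity-contradiction argument, and finally deduce well-definedness of the step-\ref{step::2_line_search} line search from stability (which the paper handles by citing the analogous proof in \cite{LAPUCCI2023242}). No gaps worth noting.
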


\begin{lemma}
	\label{lem::features_C}
	Let Assumption \ref{ass::same_set} hold, $X^0$ be a set of mutually nondominated points and $k$ be an iteration of \texttt{FPD\_NMT}. Then: (a) $V_F(C^k) \ge \max(V_F(X^{l(k)}), V_F(C^{k-1}))$; (b) $\forall y \in C^k$, $\forall \tilde{k} \ge k$, $\exists x \in X^{\tilde{k}}$ such that $F(x) \le F(y)$.
\end{lemma}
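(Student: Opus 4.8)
The plan is to treat the two claims separately; both reduce to elementary monotonicity of the dominated region $\Lambda$ under set inclusion, together with the fact that every finite point set has each of its elements weakly dominated (in $F$-value) by one of its nondominated elements — this applies because all sets produced by \texttt{FPD\_NMT} are finite.

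\emph{Part (a).} First I would note that $\bar{C}^k$, built in step~\ref{step::hatCk} of Procedure~\ref{proc::createhatXk}, is precisely the set of nondominated points of $X^{l(k)}\cup C^{k-1}$. Hence $\Lambda_F(\bar{C}^k)=\Lambda_F(X^{l(k)}\cup C^{k-1})$, and this set contains both $\Lambda_F(X^{l(k)})$ and $\Lambda_F(C^{k-1})$, the regions being taken with respect to a common reference point (the one used at iteration~$k$, which componentwise dominates every point of $X_{\text{all}}$ and hence of all the sets involved). Since $C^k\supseteq\bar{C}^k$ by step~\ref{step::Ck}, we have $\Lambda_F(C^k)\supseteq\Lambda_F(\bar{C}^k)$, and passing to Lebesgue measures gives $V_F(C^k)\ge V_F(\bar{C}^k)\ge\max\{V_F(X^{l(k)}),V_F(C^{k-1})\}$.

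\emph{Part (b).} For part~(b) I would first isolate a single-iteration fact: for every iteration index $j\ge 1$, $\forall z\in C^{j-1}$ there is $z'\in C^{j}$ with $F(z')\le F(z)$. Indeed $z\in C^{j-1}\subseteq X^{l(j)}\cup C^{j-1}$, so there is a nondominated point $z'$ of $X^{l(j)}\cup C^{j-1}$ with $F(z')\le F(z)$, and such $z'$ lies in $\bar{C}^{j}\subseteq C^{j}$ by steps~\ref{step::hatCk}--\ref{step::Ck}. I would then compose this fact along $j=k+1,\dots,\tilde{k}$ to obtain, by induction on $\tilde{k}\ge k$, that $\forall y\in C^k$ there exists $z\in C^{\tilde{k}}$ with $F(z)\le F(y)$. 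Finally, under Assumption~\ref{ass::same_set}, Corollary~\ref{cor::induct_prop_features_C} guarantees that $C^{\tilde{k}}$ is a reference set w.r.t.\ $X^{\tilde{k}}$, so property~(b) of Definition~\ref{def::reference_set} provides $x\in X^{\tilde{k}}$ with $F(x)\le F(z)\le F(y)$, which is the desired conclusion.

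I expect the real obstacle to be identifying the right invariant for part~(b): it is tempting but futile to chase the iterate sets $X^k$ directly, because the nonmonotone line searches of Procedure~\ref{proc::refinearmijo} and step~\ref{step::2_line_search} may create points of $X^{k+1}$ that are worse than those of $X^k$ in some objectives, so no monotone $X^k\to X^{k+1}$ domination relation exists. The sequence $\{C^k\}$, by contrast, is monotonically improving in the $\le$ sense thanks to the filtering of $X^{l(j)}\cup C^{j-1}$ in step~\ref{step::hatCk}; combining this with the reference-set property (which says $X^{\tilde{k}}$ always dominates $C^{\tilde{k}}$) is what closes the argument. A minor bookkeeping point, easily dispatched, is that the hyper-volumes in part~(a) are compared consistently: even if one measures $V_F(C^{k-1})$ with the reference point from iteration $k-1$, the inequality still holds because that reference point is componentwise below the one at iteration~$k$.
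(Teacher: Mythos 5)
Your proof is correct and follows essentially the same route as the paper's: the paper likewise establishes the one-step fact that every point of $C^{k-1}$ (and of $X^{l(k)}$) is weakly dominated by a point of $C^k$, deduces (a) from the fact that filtering dominated points leaves the dominated region unchanged together with inclusion monotonicity of $\Lambda$, and obtains (b) by composing that one-step fact inductively with the reference-set property of $C^{\tilde{k}}$ guaranteed by Corollary \ref{cor::induct_prop_features_C}. Your part (a) is marginally more streamlined (arguing directly through $\bar{C}^k$ and $\Lambda$-inclusions), and your remark on the consistency of the hyper-volume reference point is a detail the paper leaves implicit, but the substance of the argument is the same.
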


\begin{lemma}
	\label{lem::better_y}
	After step \ref{step::hatXk_1_add} of \texttt{FPD\_NMT} (Algorithm \ref{alg::FPD}), $z_p^k$ belongs to $\hat{X}^k$. Moreover, there exists $y \in X^{k+1}$ such that $F(y) \le F(z_p^k)$.
\end{lemma}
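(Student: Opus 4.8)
My plan is to split the claim into its two assertions and handle them in order. For the first part — that $z_p^k \in \hat X^k$ right after step \ref{step::hatXk_1_add} — I would look directly at the filtering operation in step \ref{step::hatXk_1_add}: $\hat X^k$ is replaced by $(\hat X^k \cup \{z_p^k\}) \setminus \{y \in \hat X^k \mid F(z_p^k) \lneqq F(y)\}$. The only way $z_p^k$ could fail to survive is if it were removed by the set-difference, but the removed elements all lie in the \emph{old} $\hat X^k$, and $z_p^k$ cannot dominate itself; moreover a point is never removed for being dominated \emph{by} something else in this step. Hence $z_p^k \in \hat X^k$ immediately after step \ref{step::hatXk_1_add}. (I should also note that nothing earlier in the iteration, once $x_p$ is being processed, can have removed $z_p^k$ — it has only just been created.)

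For the second part I want to produce $y \in X^{k+1}$ with $F(y) \le F(z_p^k)$. The natural route is to track what can happen to $z_p^k$ between the moment it enters $\hat X^k$ and the end of the iteration, and then to invoke the construction of $X^{k+1}$ in Procedure \ref{proc::createXk+1}. Concretely: after step \ref{step::hatXk_1_add}, $z_p^k$ is in $\hat X^k$; $\hat X^k$ is thereafter only modified by the filtering operations in steps \ref{step::hatXk_1_add}–\ref{step::hatXk_2_add} (for later points $x_{p'}$, or later subsets $I$), each of which, when it removes a point, adds a point that dominates it. A short induction on these filtering steps shows that at the end of the iteration there exists $w \in \hat X^k$ (the final $\hat X^k$) with $F(w) \le F(z_p^k)$: either $z_p^k$ is still present, or it was removed by some newly added point $w'$ with $F(w') \lneqq F(z_p^k)$, and one continues the argument with $w'$; since finitely many points are added in an iteration, this terminates. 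Then Proposition \ref{prop::reference_set} (or, more cleanly, equation \eqref{eq::Xhatk_le} established inside its proof) gives that for every element of $\hat X^k$ — in particular for $w$ — there is $y \in X^{k+1}$ with $F(y) \le F(w) \le F(z_p^k)$, which is what we want. Note this half of the argument needs Assumption \ref{ass::same_set} and Corollary \ref{cor::induct_prop_features_C} so that Proposition \ref{prop::reference_set} is actually in force at iteration $k$; this matches the lemma's hypotheses.

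The main obstacle I anticipate is the bookkeeping in the induction over the filtering operations: one must be careful that the dominance relation $F(\cdot)\le F(z_p^k)$ is propagated correctly (using transitivity of $\le$ and the fact that $\lneqq$ implies $\le$), and that the process of "follow the dominating replacement" really does terminate — which it does because each filtering step in steps \ref{step::hatXk_1_add}–\ref{step::hatXk_2_add} is triggered by the addition of a single new point and only finitely many such additions occur per iteration (one per processed $x_{p'}$ in phase 1, and one per admissible subset $I$ in phase 2, both over finite index sets). Once that is pinned down, the appeal to \eqref{eq::Xhatk_le} is immediate and the proof closes. Everything else is routine set manipulation that I would not spell out in full.
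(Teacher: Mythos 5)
Correct, and essentially the paper's own argument: the paper simply cites \cite[Lemma 3.1]{LAPUCCI2023242} (with $\tilde{k}=k+1$) for exactly the dominating-replacement chain you spell out, together with the observation that $X^{k+1}$ results from the repeated filterings of steps \ref{step::hatXk_1_add}--\ref{step::hatXk_2_add} followed by Procedure \ref{proc::createXk+1}. Your parenthetical fallback to \eqref{eq::Xhatk_le} is the right citation for the last step, since Proposition \ref{prop::reference_set} as stated refers to $C^{k}\cup\bar{X}^{l(k)}$ rather than to the final $\hat{X}^k$ (and \eqref{eq::Xhatk_le} is in fact a consequence of step \ref{step::Xk+1} alone, so it needs neither Assumption \ref{ass::same_set} nor Corollary \ref{cor::induct_prop_features_C}).
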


We are prepared to state the convergence properties of the algorithm.

\begin{theorem}
	\label{prop::convergence}
	Let Assumption \ref{ass::same_set} hold, $X^0$ be a set of mutually nondominated points, and $x_0 \in X^0$ be a point such that the set $\mathcal{L}(x_0) = \bigcup_{j=1}^{m}\{x \in \Omega\mid f_j(x) \le f_j(x_0)\}$ is compact. Let $\{X^k\}$ be the sequence of sets of nondominated points produced by \texttt{FPD\_NMT} under Assumption \ref{ass:order}. Then, there exists at least a subsequence $K \subseteq \{0, 1,\ldots\}$ such that
	\begin{enumerate}
		\item[(i)] if $\sigma_k = \sigma > 0$ for all $k$, there exists $\bar{k} \in K$ s.t.\ $\Theta(X^k) \ge -\sigma$ for all $k \in K$ s.t.\ $k \ge \bar{k}$;
		\item[(ii)] if $\sigma_k \to 0$, $\lim_{k \in K, k \to \infty} \Theta(X^k) = 0$.
	\end{enumerate}
\end{theorem}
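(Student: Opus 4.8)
The plan is to reduce the convergence analysis of \texttt{FPD\_NMT} to the monotone theory of Proposition \ref{prop:big_theta} by identifying a suitable subsequence of iterations along which the hyper-volume behaves monotonically and the reference-set mechanism can be ``unwound.'' The key structural fact is Lemma \ref{lem::features_C}(a), which gives $V_F(C^k) \ge \max(V_F(X^{l(k)}), V_F(C^{k-1}))$, so the sequence $\{V_F(C^k)\}$ is nondecreasing; combined with compactness of $\mathcal{L}(x_0)$ — which, through Lemma \ref{lem::better_y} and property (b) of Lemma \ref{lem::features_C}, confines all generated points to a bounded region and hence bounds all hyper-volumes with respect to the (eventually stabilized) reference point $r_p$ — the sequence $\{V_F(C^k)\}$ converges. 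The first concrete step, therefore, is to establish this boundedness and convergence carefully, paying attention to the fact that $r_p$ itself may change with $k$; I would argue that $r_p$ is monotone (coordinatewise nondecreasing) and bounded above by compactness, hence eventually constant, so that after a finite index the hyper-volumes are all computed against a fixed reference point.

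Next I would extract the subsequence $K$. The natural choice is to take the iterations $k$ at which $X^{l(k)} = X^k$, or more robustly, to exploit Lemma \ref{lem::better_y}: every point $z_p^k$ produced in Phase 1 is dominated (weakly) by some point of $X^{k+1}$, and when a genuine Armijo step is taken at step \ref{step::1_line_search_1var} or \ref{step::1_line_search_end} the resulting point strictly decreases all objectives relative to some anchor in $\hat X^k \supseteq C^k$; invoking Lemma \ref{lem::5.12} then yields a strictly positive hyper-volume gain of at least $\prod_{j=1}^m(f_j(\text{anchor})-f_j(z_p^k))$ contributed to $V_F(\hat X^k)$ and hence, via Procedure \ref{proc::createXk+1} and Lemma \ref{lem::features_C}(a), to $V_F(C^{k+1})$. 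Since $\{V_F(C^k)\}$ converges, these gains must tend to zero, which forces the Armijo decrease quantities $\alpha_p^k |D(x_p, v(x_p))|$ — and therefore, by the standard Armijo analysis exactly as in the proof of Proposition \ref{prop:big_theta} in \cite{lapucci2025effectivefrontdescentalgorithmsconvergence} — to control $\theta(x_p)$ for the distinguished first point $x_p \in \argmin_{x\in X^k}\theta(x)$ guaranteed by Assumption \ref{ass:order}. This is where the two cases split: in case (i) one shows $\theta$ of that first point is eventually $\ge -\sigma$ along $K$, giving $\Theta(X^k)\ge -\sigma$; in case (ii) one shows $\theta \to 0$ along $K$, using $\sigma_k\to 0$ to rule out the branch where step \ref{step::iftheta} fails.

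The main obstacle I anticipate is handling the interaction between the nonmonotone anchor and the line search when the anchor point $c_p^k$ is itself drawn from a ``worse'' set $C^k$: unlike the monotone case, a successful step need not improve upon $x_p$, so the per-step hyper-volume gain cannot be bounded below by a decrease measured at $x_p$. The resolution is to measure the gain relative to the anchor $c_p^k$ (which is what Procedure \ref{proc::refinearmijo} step \ref{step::1_line_search_1var} guarantees the step improves upon) and then to use Lemma \ref{lem::features_C}(b) to relate $c_p^k$ back to points that persist in future $X^{\tilde k}$; one must also rule out the degenerate possibility that the anchor $c_p^k$ coincides (in objective values) with $z_p^k$ in the limit without $\theta(x_p)$ being small, which again reduces to the Armijo sufficient-decrease inequality. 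A secondary technical point is that Assumption \ref{ass:order} must be applied at the right iterations of $K$; one should choose $K$ so that the first-processed point genuinely attains $\Theta(X^k)$ and its Phase-1 step (if any) feeds a measurable hyper-volume increment, which I expect to follow from Lemma \ref{lem::better_y} together with the filtering structure in steps \ref{step::hatXk_1_add}--\ref{step::hatXk_2_add}. Once these points are in place, the argument mirrors \cite{lapucci2025effectivefrontdescentalgorithmsconvergence} closely, with the convergence of $\{V_F(C^k)\}$ playing the role that the monotone convergence of $\{V_F(X^k)\}$ plays there, and the accumulation-point statement in (ii) following from compactness of $\mathcal{L}(x_0)$ and continuity of $\theta$.
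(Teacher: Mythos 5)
Your overall ingredients are the right ones (monotonicity and convergence of $\{V_F(C^k)\}$, a hyper-volume gain measured against the anchor $c_p^k$ via Lemma \ref{lem::5.12}, and a final Armijo-failure contradiction), but the crucial step of your plan does not go through as written. You claim that the gain produced by a successful Armijo step at iteration $k$ is transferred ``via Procedure \ref{proc::createXk+1} and Lemma \ref{lem::features_C}(a) to $V_F(C^{k+1})$.'' It is not: Lemma \ref{lem::features_C}(a) lower-bounds $V_F(C^{k+1})$ by $V_F(X^{l(k+1)})$, i.e.\ by the hyper-volume of the \emph{worst} set in the memory window, not by $V_F(X^{k+1})$ or $V_F(\hat X^k)$, so the increment you obtain at a generic iteration $k$ simply disappears from the bookkeeping whenever $X^{l(k+1)}$ is an older set. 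This is precisely why the theorem only asserts a subsequence, and why the paper runs the entire contradiction argument at the shifted indices $\hat l(k)=l(k)-1$: there, Lemma \ref{lem::better_y} places a point $y^{\hat l(k)}\in X^{\hat l(k)+1}=X^{l(k)}$ dominating $z^{\hat l(k)}$ (hence strictly dominating the anchor $c^{\hat l(k)}\in C^{\hat l(k)}$), so Lemma \ref{lem::5.12} gives $V_F(X^{l(k)})\ge V_F(C^{\hat l(k)})+(\gamma\alpha_{\hat l(k)}\bar\sigma)^m$, and Lemma \ref{lem::features_C}(a) then yields $V_F(C^{k})-V_F(C^{\hat l(k)})\ge(\gamma\alpha_{\hat l(k)}\bar\sigma)^m$, which tends to zero by convergence of $\{V_F(C^k)\}$ and forces $\alpha_{\hat l(k)}\to 0$. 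Your alternative choice of $K$ as the iterations with $X^{l(k)}=X^k$ is not the right one either (such iterations need not occur infinitely often, and nothing ties the gain at those iterations to the convergent sequence). You acknowledge that $K$ must be chosen so that the Phase-1 step ``feeds a measurable hyper-volume increment,'' but you never identify the choice $K=\{l(k)-1\}$ that makes this true, and without it the contradiction cannot be closed.

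Two secondary remarks. First, your claim that the reference point $r_p$ is ``eventually constant'' because it is coordinatewise nondecreasing and bounded is false in general (a monotone bounded sequence need not stabilize); the correct fix, used in the paper, is to carry out the whole analysis with a fixed reference point $\bar\zeta$ with $\bar\zeta_j=\max_{\bar x\in\mathcal L(x_0)}f_j(\bar x)$, well defined by compactness of $\mathcal L(x_0)$ and dominating every generated point. Second, the argument is not a reduction to Proposition \ref{prop:big_theta}: one also needs the inequality $F(x^{\hat l(k)})\le F(c^{\hat l(k)})$ guaranteed at step \ref{step::1_line_search_1var} to convert failure of the Armijo condition relative to the anchor into failure relative to $x^{\hat l(k)}$ itself, before passing to the limit and invoking \cite[Lemma 4]{Fliege2000} to contradict $\theta(\bar x)\le-\bar\sigma<0$; this step is only hinted at in your sketch.
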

\begin{proof}
	We begin the proof showing that for all $k = 0, 1,\ldots$, and for all $x \in X^k$, we have $x \in \mathcal{L}(x_0)$. Let $k$ be a generic iteration and $x \in X^k$. Since Assumption \ref{ass::same_set} holds and $X^0$ is a
	set of nondominated points, by Corollary \ref{cor::induct_prop_features_C} the set $X^k$ also consists of mutually nondominated solutions. By steps \ref{step::hatCk}-\ref{step::Ck} of Procedure \ref{proc::createhatXk}, we have that $x_0 \in C^0$; then, by point (b) of Lemma \ref{lem::features_C}, there exists a point $y_k \in X^k$ such that $F(y_k) \le F(x_0)$; since $y_k$ does not dominate $x$, there exists an index $h$ such that $f_h(x) \le f_h(y_k) \le f_h(x_0)$, i.e., $x \in \mathcal{L}(x_0)$.
	Hence, we have that
	\begin{equation}
		\label{eq::L0}
		x \in \mathcal{L}(x_0), \quad \forall x \in X^k,\ \forall k.
	\end{equation}
	
	Now, we show that a reference point $\zeta \in \mathbb{R}^m$ exists such that, for all $k = 0, 1, \ldots$, for all $x \in X^k$, $F(x) \le \zeta$. Let $\bar{\zeta} \in \mathbb{R}^m$ such that, for all $j \in \{1,\ldots,m\}$, $\bar{\zeta}_j = \max_{\bar{x}\in\mathcal{L}(x_0)}f_j(\bar{x})$; we observe that this vector is well-defined, since $F$ is continuous and $\mathcal{L}(x_0)$ is compact. Let $k$ be any iteration and $x \in X^k$. By Equation \eqref{eq::L0}, we have $x \in \mathcal{L}(x_0)$. Therefore, for all $j \in \{1,\ldots,m\}$, we have $f_j(x) \le \max_{\bar{x}\in\mathcal{L}(x_0)}f_j(\bar{x}) = \bar{\zeta}_j$. Thus, we can conclude that $F(x) \le \bar{\zeta}$, for all $x \in X^k$ and for all $k$. Moreover, by definition of the set $C^k$, we also have that
	\begin{equation}
		\label{eq::Czeta}
		F(x) \le \bar{\zeta},\quad \text{for all } x \in C^k,\ \text{for all } k.
	\end{equation}
	
	Let us consider the subsequence $\{X^{\hat{l}(k)}\}$, with $\hat{l}(k) = l(k)-1$, and, by contradiction, let us assume that the thesis is false in the two cases $\sigma_k = \sigma > 0$ and $\sigma_k \to 0$, respectively: there exists an infinite subsequence $K_1 \subseteq \{0,1,\ldots\}$ such that
	\begin{enumerate}
		\item[(i)] $\forall k \in K_1$ sufficiently large, we have $\Theta(X^{\hat{l}(k)}) < -\sigma$;
		\item[(ii)] there exists $\varepsilon > 0$ such that, $\forall k \in K_1$ sufficiently large, we have $\Theta(X^{\hat{l}(k)}) < -\varepsilon$.
	\end{enumerate}
	We can analyze both cases simultaneously, by assuming there exists $\bar{\sigma} > 0$ such that, for all $k \in K_1$ sufficiently large, $\Theta(X^{\hat{l}(k)}) < -\bar{\sigma}$. Since in case (i) we have $\bar{\sigma} = \sigma_k$, and in case (ii) $\sigma_k \to 0$, it follows that $\Theta(X^{\hat{l}(k)}) < -\sigma_{\hat{l}(k)}$ for all $k \in K_1$ sufficiently large.
	
	Now, let $\{x^{\hat{l}(k)}\}$ be the sequence of points produced by \texttt{FPD\_NMT} such that, for all $k$, $x^{\hat{l}(k)} \in X^{\hat{l}(k)}$ is the first point to be processed in the for loop of steps \ref{step::start_for}-\ref{step::end_for} of Algorithm \ref{alg::FPD} at iteration $\hat{l}(k)$. By Assumption \ref{ass:order}, $x^{\hat{l}(k)} \in \argmin_{x \in X^{\hat{l}(k)}}\theta(x)$ for all $k$, i.e., $\{x^{\hat{l}(k)}\}$ is a sequence of ``least Pareto-stationary'' points in $\{X^{\hat{l}(k)}\}$, with $\theta(x^{\hat{l}(k)}) = \Theta(X^{\hat{l}(k)})$ for all $k$. By Equation \eqref{eq::L0}, we trivially get that $\{x^{\hat{l}(k)}\}_{K_1} \subseteq\{x^{\hat{l}(k)}\}\subseteq\mathcal{L}(x_0)$ and, thus, $\{x^{\hat{l}(k)}\}_{K_1}$ admits accumulation points.
	
	Let $\bar{x}$ be an accumulation point of $\{x^{\hat{l}(k)}\}_{K_1}$, i.e., there exists $K_2 \subseteq K_1$ such that $x^{\hat{l}(k)} \to \bar{x}$ for $k \in K_2$, $k \to \infty$.
	We define $z^{\hat{l}(k)} = x^{\hat{l}(k)} + \alpha_{\hat{l}(k)}v(x^{\hat{l}(k)})$ as the point obtained at step \ref{step::zck} of Algorithm \ref{alg::FPD} while processing point $x^{\hat{l}(k)}$. Since $F$ is continuously differentiable and $\{x^{\hat{l}(k)}\}_{K_2}$ is convergent, by Equation \eqref{eq::bound_v} we have that $\{v(x^{\hat{l}(k)})\}_{K_2}$ is bounded. Moreover, $\alpha_{\hat{l}(k)} \in [0, \alpha_0]$ which is a compact set. Therefore, there exists $K_3 \subseteq K_2$ such that $v(x^{\hat{l}(k)}) \to \bar{v}$ and $\alpha_{\hat{l}(k)} \to \bar{\alpha} \in [0, \alpha_0]$ for $k \in K_3$, $k \to \infty$.
	
	By definition of $K_1$ and the sequences $\{\sigma_{\hat{l}(k)}\} \subseteq \{\sigma_k\}$ and $\{x^{\hat{l}(k)}\}$, we have that, for all $k \in K_1$ sufficiently large, $\theta(x^{\hat{l}(k)}) = \Theta(X^{\hat{l}(k)}) < -\bar{\sigma} \le -\sigma_{\hat{l}(k)} \le 0$. Also taking into account Assumption \ref{ass:order} and the instructions of the algorithm, it follows that $\alpha_{\hat{l}(k)}$ must necessarily be obtained at step \ref{step::1_line_search_1var} of Procedure \ref{proc::refinearmijo}: there exists $c^{\hat{l}(k)} \in C^{\hat{l}(k)}$ such that $F(z^{\hat{l}(k)}) \le F(c^{\hat{l}(k)}) + \mathbf{1}\gamma\alpha_{\hat{l}(k)}D(x^{\hat{l}(k)}, v(x^{\hat{l}(k)})).$
	Since $D(x^{\hat{l}(k)}, v(x^{\hat{l}(k)})) \le \theta(x^{\hat{l}(k)})$, we reformulate the last result as $F(z^{\hat{l}(k)}) \le F(c^{\hat{l}(k)}) + \mathbf{1}\gamma\alpha_{\hat{l}(k)}\theta(x^{\hat{l}(k)}).$
	By the continuity of $\theta$, we also have the following result: $\theta(\bar{x}) = \lim_{k \in K_3, k \to \infty} \theta(x^{\hat{l}(k)}) \le -\bar{\sigma} < 0$, i.e., $\bar{x}$ is not Pareto stationary. Incorporating the two last inequalities, we obtain, for $k \in K_3$ sufficiently large,
	\begin{equation}
		\label{eq::after_eta2}
		F(z^{\hat{l}(k)}) \le F(c^{\hat{l}(k)}) - \mathbf{1}\gamma\alpha_{\hat{l}(k)}\bar{\sigma}.
	\end{equation}
	
	By point (a) of Lemma \ref{lem::features_C}, the sequence $\{V_F(C^k)\}$ is monotone nondecreasing and, in fact, it admits limit $\bar{V}$. Similarly to $\bar{\zeta}$, we can define a vector $\bar{\pi} \in \mathbb{R}^m$ such that, for all $j \in \{1,\ldots,m\}$, $\bar{\pi}_j = \min_{\bar{x}\in\mathcal{L}(x_0)}f_j(\bar{x})$. It then follows that $\Lambda_F(C^k) \subseteq \{y \in \mathbb{R}^m \mid \bar{\pi} \le y \le \bar{\zeta}\}$, where the latter set is compact and therefore has a finite measure $M$. Consequently, $V_F(C^k) \le M < \infty$, and the sequence $\{V_F(C^k)\}$ must converge to a finite limit $\bar{V}$.
	
	Now, by Lemma \ref{lem::better_y} and definition of $\hat{l}(k)$, for all $k$ a point $y^{\hat{l}(k)} \in X^{l(k)}$ exists such that $F(y^{\hat{l}(k)}) \le F(z^{\hat{l}(k)})$. We then have $(X^{l(k)}\cup C^{\hat{l}(k)})\supseteq \{y^{\hat{l}(k)}\} \cup C^{\hat{l}(k)} \supseteq C^{\hat{l}(k)}$, which, by point (b) of Lemma \ref{lem::features_C} and the properties of the dominated region, implies that $\Lambda_F(X^{l(k)})=\Lambda_F(X^{l(k)}\cup C^{\hat{l}(k)})\supseteq \Lambda_F(\{y^{\hat{l}(k)}\}\cup C^{\hat{l}(k)})\supseteq \Lambda_F(C^{\hat{l}(k)}),$
	and thus
	\begin{equation}
		\label{eq:hv_CX_inc}
		V_F(X^{l(k)}) \ge V_F(\{y^{\hat{l}(k)}\} \cup C^{\hat{l}(k)}) \ge V_F(C^{\hat{l}(k)}).
	\end{equation}
	We shall also note that $F(y^{\hat{l}(k)})\le F(z^{\hat{l}(k)})<F(c^{\hat{l}(k)})$, i.e., $c^{\hat{l}(k)}$ is dominated by $y^{\hat{l}(k)}$, and then
	\begin{equation}
		\label{eq:hv_CX_same}
		V_F(\{y^{\hat{l}(k)}\} \cup C^{\hat{l}(k)}) = V_F(\{y^{\hat{l}(k)}\} \cup C^{\hat{l}(k)} \setminus \{c^{\hat{l}(k)}\}).
	\end{equation} 
	Moreover, by Corollary \ref{cor::induct_prop_features_C} and definition of reference set, for all $k$ the set $C^k$ contains mutually nondominated points, i.e., $F(C^k)$ is a stable set. Also recalling Equation \eqref{eq::Czeta}, it follows that all the assumptions of Lemma \ref{lem::5.12} are satisfied. In particular, we obtain $V_F(\{y^{\hat{l}(k)}\} \cup C^{\hat{l}(k)} \setminus \{c^{\hat{l}(k)}\})-V_F(C^{\hat{l}(k)}) \ge \prod_{j=1}^m(f_j(c^{\hat{l}(k)})-f_j(y^{\hat{l}(k)})) \ge \prod_{j=1}^m(f_j(c^{\hat{l}(k)})-f_j(z^{\hat{l}(k)})).$
	Recalling point (a) of Lemma \ref{lem::features_C}, and putting together the last result, \eqref{eq::after_eta2}-\eqref{eq:hv_CX_inc}-\eqref{eq:hv_CX_same}, we finally get that $V_F(C^k) - V_F(C^{\hat{l}(k)}) \ge V_F(X^{l(k)}) - V_F(C^{\hat{l}(k)}) \ge V_F(\{y^{\hat{l}(k)}\} \cup C^{\hat{l}(k)} \setminus \{c^{\hat{l}(k)}\}) - V_F(C^{\hat{l}(k)}) \ge (\gamma\alpha_{\hat{l}(k)}\bar{\sigma})^m$.
	Also recalling that $V_F(C^k) \to \bar{V}$, we take the limits for $k \in K_3$, $k \to \infty$ to obtain $\lim_{k \in K_3, k \to \infty} (\gamma\alpha_{\hat{l}(k)}\bar{\sigma})^m \le 0$.
	Since $\gamma > 0$ and $\bar{\sigma} > 0$, we necessarily have that $\alpha_{\hat{l}(k)} \to 0$ for $k \in K_3$, $k\to\infty$.
	
	Since $\alpha_{\hat{l}(k)}$ is defined at step \ref{step::1_line_search_1var} of Procedure \ref{proc::refinearmijo} and $\alpha_{\hat{l}(k)} \to 0$ for $k \in K_3$, $k \to \infty$, given any $q \in \mathbb{N}$ we must have $\alpha_{\hat{l}(k)} < \alpha_0\delta^q$ for all $k \in K_3$ sufficiently large. Hence, $\alpha_0\delta^q$ does not satisfy the Armijo condition: there exists $j_{\hat{l}(k)}$ such that $f_{j_{\hat{l}(k)}}(x^{\hat{l}(k)} + \alpha_0\delta^qv(x^{\hat{l}(k)})) > f_{j_{\hat{l}(k)}}(c^{\hat{l}(k)}) + \gamma\alpha_0\delta^qD(x^{\hat{l}(k)}, v(x^{\hat{l}(k)}))
	\ge f_{j_{\hat{l}(k)}}(x^{\hat{l}(k)}) + \gamma\alpha_0\delta^qD(x^{\hat{l}(k)}, v(x^{\hat{l}(k)})),$
	where the second inequality follows from the definition of $c^{\hat{l}(k)}$ at step \ref{step::1_line_search_1var} of Procedure \ref{proc::refinearmijo}.
	Taking the limits along a further subsequence such that $j_{\hat{l}(k)} = \bar{j}$, we obtain that
	$f_{\bar{j}}(\bar{x} + \alpha_0\delta^q\bar{v}) \ge f_{\bar{j}}(\bar{x}) + \gamma\alpha_0\delta^qD(\bar{x}, \bar{v}).$
	Since $q$ is arbitrary, by \cite[Lemma 4]{Fliege2000} it must be that $D(\bar{x}, \bar{v}) \ge 0$. However, we know that $D(x^{\hat{l}(k)}, v(x^{\hat{l}(k)})) \le \theta(x^{\hat{l}(k)})$ which, in the limit, turns into $D(\bar{x}, \bar{v}) \le \theta(\bar{x}) \le -\bar{\sigma} < 0$. We finally get a contradiction; hence, the proof is complete.
\end{proof}

\begin{corollary}
	Let $\{X^k\}$ be the sequence of sets of nondominated points produced by \texttt{FPD\_NMT} under the assumptions of Theorem \ref{prop::convergence} and with $\sigma_k \to 0$. Moreover, let $\{x^k\}$ be any sequence such that $x^k \in X^k$ for all $k$, and $K \subseteq \{0, 1,\ldots\}$ be the subsequence defined in Theorem \ref{prop::convergence}. Then $\{x^k\}_K$ admits accumulation points and every accumulation point is Pareto-stationary.
\end{corollary}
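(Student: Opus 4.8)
The plan is to derive the statement directly from Theorem~\ref{prop::convergence}, the compactness of $\mathcal{L}(x_0)$, the continuity of $\theta$, and the characterization that a point $\bar{x}$ is Pareto-stationary precisely when $\theta(\bar{x})=0$.

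First I would observe that, as established in the proof of Theorem~\ref{prop::convergence} (see Equation~\eqref{eq::L0}), every point produced by \texttt{FPD\_NMT} lies in $\mathcal{L}(x_0)$; in particular $\{x^k\}_K \subseteq \mathcal{L}(x_0)$, which is compact by hypothesis, and $K$ is infinite by construction in Theorem~\ref{prop::convergence}. Hence $\{x^k\}_K$ admits accumulation points, which proves the first assertion.

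Next I would fix an arbitrary accumulation point $\bar{x}$ of $\{x^k\}_K$ and an infinite index set $K' \subseteq K$ with $x^k \to \bar{x}$ as $k \to \infty$, $k \in K'$. Since $x^k \in X^k$, the definition of $\Theta$ gives $\Theta(X^k) = \inf_{x \in X^k}\theta(x) \le \theta(x^k) \le 0$ for every $k$, the last inequality because $\theta \le 0$ always holds. By Theorem~\ref{prop::convergence}(ii) with $\sigma_k \to 0$ we have $\lim_{k \in K, k\to\infty}\Theta(X^k)=0$, hence also $\lim_{k \in K', k\to\infty}\Theta(X^k)=0$ since $K'\subseteq K$. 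Letting $k \to \infty$ along $K'$ in the chain $\Theta(X^k) \le \theta(x^k) \le 0$ and using the continuity of $\theta$, so that $\theta(x^k)\to\theta(\bar{x})$, a squeeze argument forces $\theta(\bar{x}) = 0$; therefore $\bar{x}$ is Pareto-stationary for problem~\eqref{eq::mo_prob}.

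The argument is essentially routine given Theorem~\ref{prop::convergence}, and I do not expect any genuine obstacle. The only points needing a line of care are the inequality $\theta(x^k) \ge \Theta(X^k)$, which is immediate from $x^k \in X^k$ and $\Theta$ being an infimum, and the transfer of the limit $\Theta(X^k)\to 0$ from $K$ to the sub-subsequence $K'$.
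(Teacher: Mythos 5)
Your proposal is correct and follows essentially the same route as the paper's proof: both use Equation~\eqref{eq::L0} together with the compactness of $\mathcal{L}(x_0)$ to get accumulation points, then squeeze $\theta(x^k)$ between $\Theta(X^k)$ and $0$ along the subsequence, invoking Theorem~\ref{prop::convergence}(ii) and the continuity of $\theta$ to conclude $\theta(\bar{x})=0$. The only difference is that you spell out the continuity and squeeze steps more explicitly than the paper does.
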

\begin{proof}
	By Equation \eqref{eq::L0}, we have that $\{x^k\} \subseteq \mathcal{L}(x_0)$, and so does $\{x^k\}_K$ which thus admits accumulation points. By the definition of $\Theta$, we have that, for all $k$, $0 \ge \theta(x^k) \ge \Theta(X^k)$. Thus, taking the limit along any subsequence $K_1 \subseteq K$ such that $x^k \to \bar{x}$ for $k \in K_1$, $k \to \infty$, and recalling Theorem \ref{prop::convergence}, we have $\theta(\bar{x}) = 0$, which concludes the proof.
\end{proof}

\begin{remark}
	It is worth noting that our results are established for a single subsequence. Deriving a stronger result, similar to Proposition \ref{prop:big_theta}, would require a forcing function on the distance between consecutive iterates $X^{l(k)}$ and $X^{l(k)-1}$, capturing improvement relative to the ``least Pareto-stationary'' points. However, such an assumption is rarely satisfied in practice. Future work could further investigate this direction.
\end{remark}

\begin{figure*}
	\centering
	\subfloat{\includegraphics[width=0.175\textwidth]{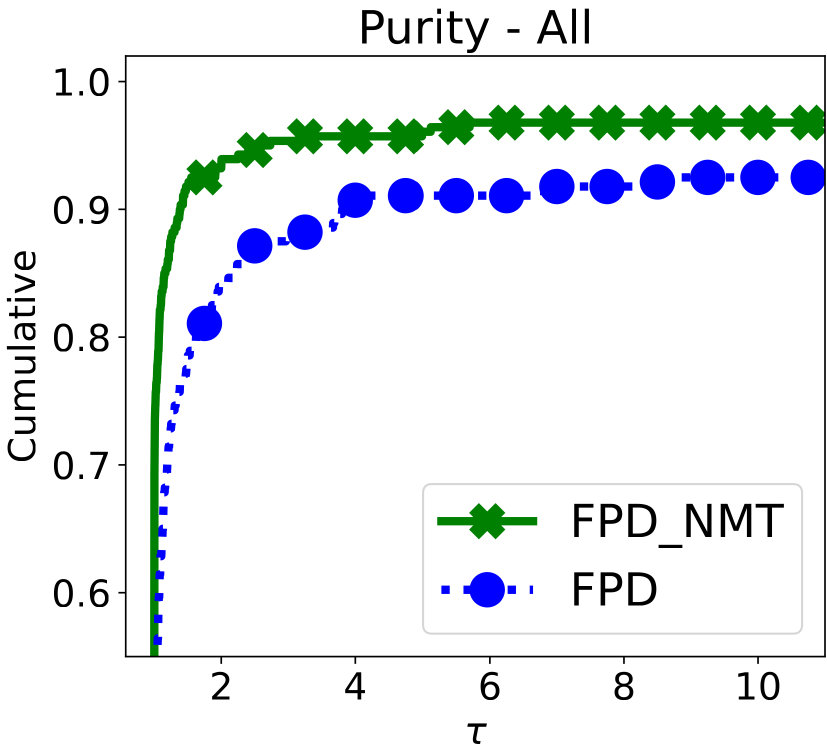}}
	\hfil
	\subfloat{\includegraphics[width=0.175\textwidth]{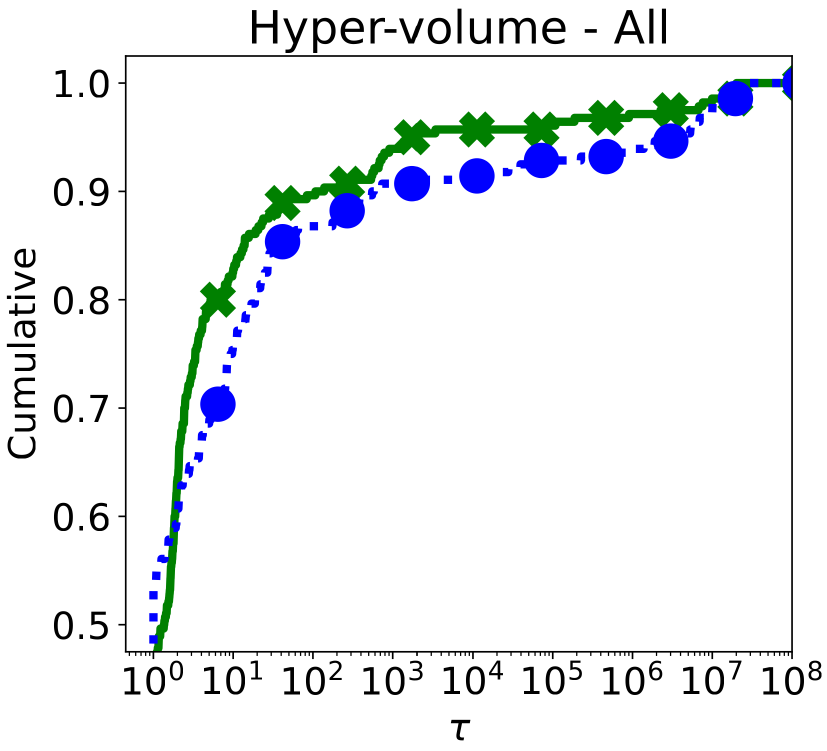}}
	\hfil
	\subfloat{\includegraphics[width=0.175\textwidth]{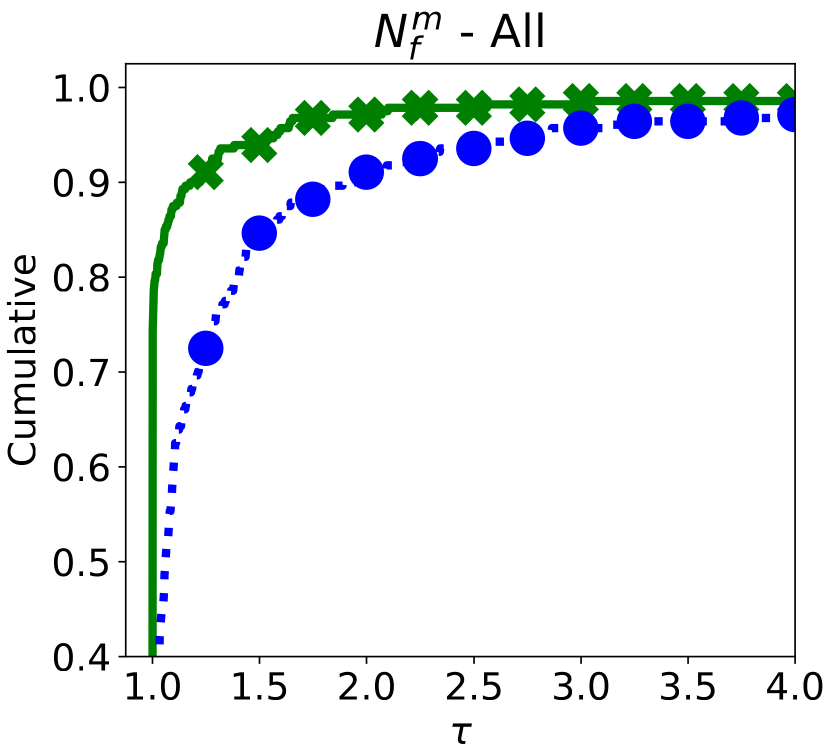}}
	\hfil
	\subfloat{\includegraphics[width=0.175\textwidth]{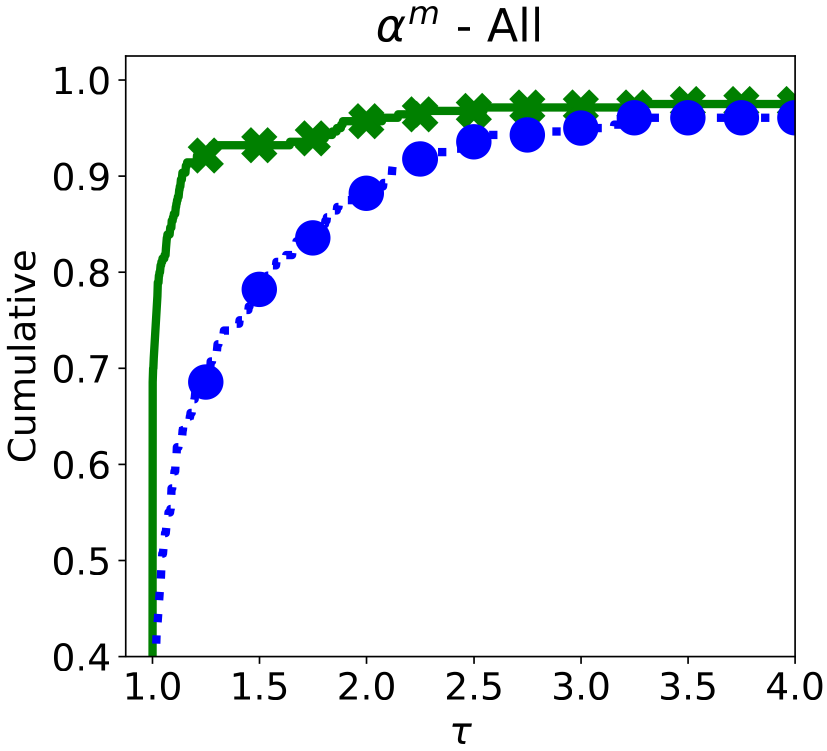}}
	\\
	\subfloat{\includegraphics[width=0.175\textwidth]{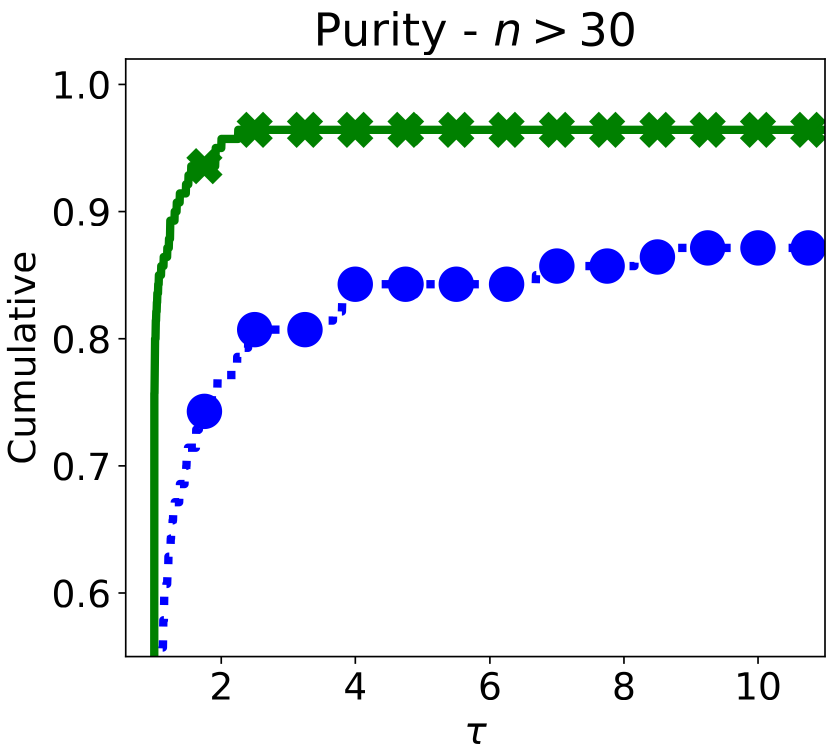}}
	\hfil
	\subfloat{\includegraphics[width=0.175\textwidth]{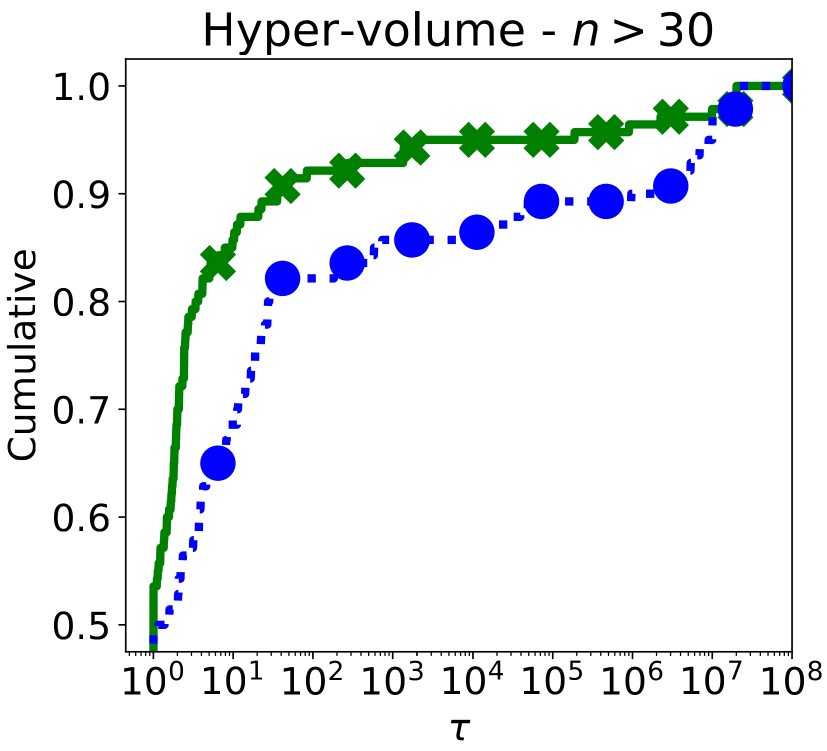}}
	\hfil
	\subfloat{\includegraphics[width=0.175\textwidth]{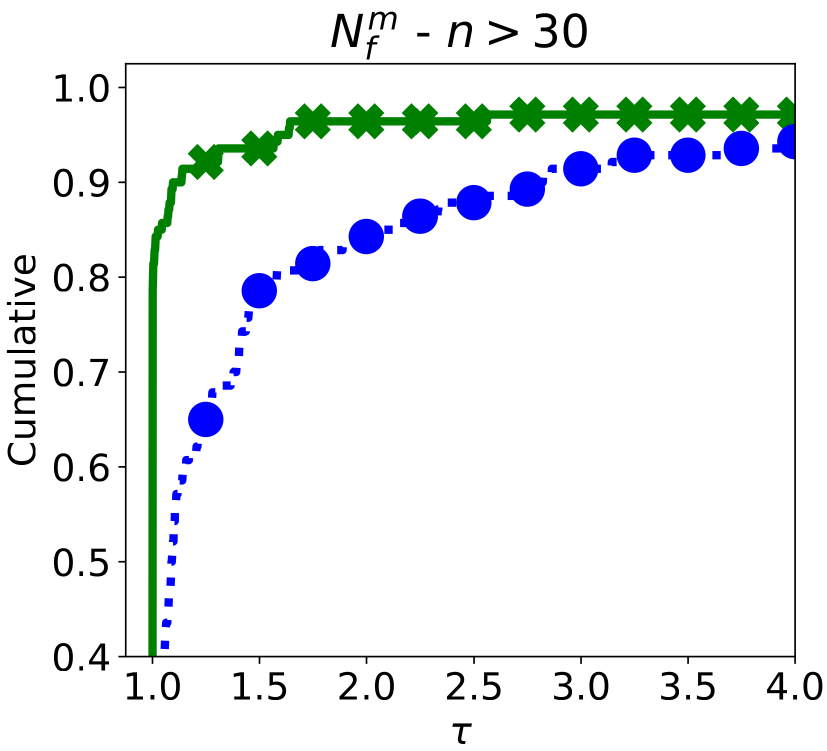}}
	\hfil
	\subfloat{\includegraphics[width=0.175\textwidth]{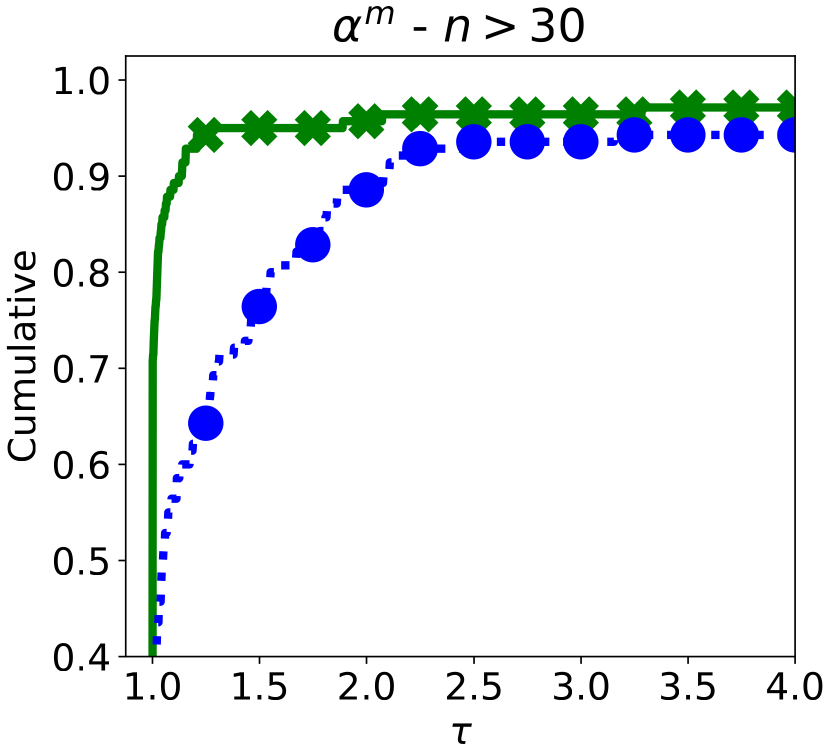}}
	\caption{Performance profiles for \texttt{FPD} and \texttt{FPD\_NMT} w.r.t.\ purity, hyper-volume, $N_f^m$, and $\alpha^m$. First row: full problem benchmark; second row: high-dimensional problems ($n > 30$).}
	\label{fig:all}
\end{figure*}

\section{Numerical results}
\label{sec::exp}

We present computational experiments assessing the effectiveness and consistency of our proposal. The Python3 code was executed on Ubuntu 22.04 with an Intel Xeon E5-2430 v2 (6 cores, 2.50 GHz) and 32~GB RAM, using Gurobi 12 \cite{gurobi} to solve problems \eqref{eq::projected_common}--\eqref{eq::projected_partial}.

We compared the nonmonotone \texttt{FPD\_NMT} to the original \texttt{FPD}, with shared parameters $\alpha_0 = 1, \delta=0.5, \gamma = 10^{-4}$, and a crowding distance \cite{Deb2002} heuristic to limit the generation of closely spaced points. For \texttt{FPD\_NMT}, the $M$ parameter was set to $4$ based on preliminary results, which we report in the supplementary material.

The benchmark is composed by box-constrained problems: CEC09 suite \cite{zhang_multiobjective_2009}, JOS\_1 ($\Omega = [0, 100]^n$) \cite{jin2001dynamic}, MAN \cite{lapucci2022memetic}, ZDT\_1 and ZDT\_3 \cite{Zitzler2000}, mostly with two objectives ($m=2$), except CEC09\_8, CEC09\_9, CEC09\_10 with $m=3$. Problem dimensions were $n \in \{5,6,8,10,12,\allowbreak15,17,20,25, \allowbreak 30,35,40, \allowbreak 45,50,100,200,300,500,1000,5000\}$. For CEC09 problems, $n$ starting points were uniformly sampled along the hyper-box diagonal; for others, single challenging initial points were considered: $(50,\ldots,50)^\top$ for JOS\_1, $(-10,\allowbreak\ldots,-10)^\top$ for MAN, and $(0.5,\ldots,0.5)^\top$ for ZDT.

Performance and robustness were assessed via \textit{performance profiles} \cite{dolan2002benchmarking}, using standard metrics -- \textit{purity} \cite{custodio11} and \textit{hyper-volume} \cite{zitzler98} -- and two ad-hoc metrics: $N_f^m$ (function evaluations per processed point) and $\alpha^m$ (average Armijo-type line-search stepsize). The reference Pareto set for purity combined the two algorithms’ solutions, discarding dominated points; the hyper-volume reference point was computed as in Section \ref{sec::fd_nmt}, with $X_\text{all}$ being the union of all solution sets. Purity, hyper-volume and $\alpha^m$ have increasing values for better solutions: thus, for consistency with performance profiles, purity and $\alpha^m$ values were inverted, while, similarly to \cite{lapucci2025effectivefrontdescentalgorithmsconvergence}, hyper-volume ones were transformed to $V_\mathcal{R} - V_\text{solver} + 10^{-7}$, where $V_\mathcal{R}$ is the hypervolume of the reference Pareto set on the considered problem instance.

In the first row of Figure \ref{fig:all}, performance profiles for \texttt{FPD} and \texttt{FPD\_NMT} are shown w.r.t.\ purity, hyper-volume, $N_f^m$, and $\alpha^m$ across the full problem benchmark. \texttt{FPD\_NMT} proved to be more effective than \texttt{FPD} in terms of purity; a similar trend has been observed for the hyper-volume metric, albeit with smaller performance differences. For $N_f^m$ and $\alpha^m$, \texttt{FPD\_NMT} clearly outperformed the original approach; improvements on one metric reasonably reflect those on the other, as the nonmonotone algorithm requires fewer line search iterations and thus fewer function evaluations. These savings, however, do not compromise our method effectiveness, as reflected in the performance profiles for purity and hyper-volume.

While the two algorithms perform similarly overall on low-dimensional problems ($n \le 30$; see the supplementary material), their differences become markedly pronounced in high-dimensional settings ($n > 30$; see the second row of Figure \ref{fig:all}), where \texttt{FPD\_NMT} clearly stands out as the superior method.

\section{Conclusions}
\label{sec::conclusions}

In this paper, we introduced a nonmonotone variant of the Front Descent framework with convergence guarantees consistent with both those of the original method \cite{lapucci2025effectivefrontdescentalgorithmsconvergence} and classical nonmonotone theory. To the best of our knowledge, this is the first attempt in the literature to integrate nonmonotone line searches into a front-based approach capable of handling sequences of point sets to approximate the Pareto front. Numerical experiments confirm the effectiveness and robustness of the proposed approach. 

Future work may include further theoretical investigations to establish stronger convergence properties, similar to those available for nonmonotone single-point multi-objective methods \cite{Mita2019}. Moreover, the algorithm could be extended to handle constraints other than box ones.

\renewcommand{\theequation}{A\arabic{equation}}
\setcounter{equation}{0}

\renewcommand{\thetable}{C\Roman{table}}
\setcounter{table}{0}
\renewcommand{\theHtable}{Supplement\thetable}

\renewcommand{\theproposition}{A\arabic{proposition}}
\setcounter{proposition}{0}
\renewcommand{\theassumption}{A\arabic{assumption}}
\setcounter{assumption}{0}

\renewcommand{\thealgocf}{A\arabic{algocf}}
\setcounter{algocf}{0}

{\appendices
	\section{Algorithmic Scheme of the Nonmonotone Front Descent Algorithm}
	
	In Algorithm \ref{alg::FPD_NMT}, we present the complete scheme of the \texttt{FPD\_NMT} algorithm, which essentially corresponds to Algorithm 1 in the paper, with Procedures 1-2-3 respectively replacing steps 4-8-17.
	
	\SetInd{1ex}{1ex}
	\begin{algorithm}[!h]
		\caption{\textit{Nonmonotone Front Projected Gradient}} 
		\label{alg::FPD_NMT}
		Input: $F:\mathbb{R}^n \rightarrow \mathbb{R}^m$, $X^0 \subset [l, u]$ set of mutually nondominated points w.r.t.\ $F$, $\alpha_0\in(0, 1],$ $\delta\in(0,1),\gamma\in(0,1)$, $\{\sigma_k\} \subseteq \mathbb{R}_+$, $M \in \mathbb{N}$.\\
		$k = 0$\\
		$C^{-1} = \emptyset$\label{stepapp::C-1}\\
		\While{a stopping criterion is not satisfied}
		{   
			$X^{l(k)} \in \argmin_{X \in \{X^k, X^{k-1},\ldots, X^{k-\min\{k, M\}}\}}V_F(X)$\label{stepapp::Xlk}\\
			$\bar{C}^k = \{x \in X^{l(k)} \cup C^{k-1} \mid \nexists y \in X^{l(k)} \cup C^{k-1}: F(y) \lneqq F(x)\}$\label{stepapp::hatCk}\\
			$C^k = \bar{C}^k \cup \{x \in X^k \mid \forall y \in \bar{C}^k\ \exists j: f_j(y) < f_j(x)\}$\label{stepapp::Ck}\\
			$\hat{X}^k = C^k$\label{stepapp::init_hatXk}\\
			\ForAll{$x_p\in X^k$ \label{stepapp::start_for}}
			{
				\If{$\nexists y \in \hat{X}^k \text{ s.t.\ } F(y) \lneqq F(x_p)$ \label{stepapp::nondominance_xc}}
				{
					\If{$\theta(x_p)<-\sigma_k$\label{stepapp::iftheta}}
					{   
						\If{$\exists c \in \hat{X}^k: F(x_p) \le F(c)$ \label{stepapp::1_line_search_start}}
						{
							$\alpha_p^k = \max_{h\in\mathbb{N}} \{\alpha_0\delta^h\mid \exists c_p^k \in \hat{X}^k: F(x_p) \le F(c_p^k) \land F(x_p+\alpha_0\delta^hv(x_p))\le F(c_p^k)+\mathbf{1}\gamma\alpha_0\delta^hD(x_p, v(x_p))\}$\label{stepapp::1_line_search_1var}
						}
						\Else{
							$\alpha_p^k = \max_{h\in\mathbb{N}} \{\alpha_0\delta^h\mid F(x_p+\alpha_0\delta^hv(x_p))\le F(x_p)+\mathbf{1}\gamma\alpha_0\delta^hD(x_p, v(x_p))\}$ \label{stepapp::1_line_search_end}
						}
					}
					\Else{
						$\alpha_p^k=0$
					}
					$z_p^k = x_p+\alpha_p^kv(x_p)$\label{stepapp::zck}\\
					$\hat{X}^k = (\hat{X}^k \cup \{z^k_p\}) \setminus \{y \in \hat{X}^k \mid F(z^k_p) \lneqq F(y)\}$\label{stepapp::hatXk_1_add}\\
					\ForAll{$I\subset\{1,\ldots,m\}$ s.t.\ $\theta^I(z_p^k) < 0$\label{stepapp::start_second_phase}}
					{
						\If{$z_p^k\in\hat{X}^k$}
						{
							$\alpha_p^I$ = $\max_{h\in\mathbb{N}} \{\alpha_0\delta^h\mid\forall y\in\hat{X}^k,\;\exists j \in \{1,\ldots,m\}: f_j(z_p^k+\alpha_0\delta^h v^I(z_p^k)) < f_j(y)\}$\label{stepapp::2_line_search}\\
							$\hat{X}^k = (\hat{X}^k \cup \{z_p^k + \alpha_p^I v^I(z_p^k)\}) \setminus \{y \in \hat{X}^k \mid F(z_p^k + \alpha_p^I v^I(z_p^k)) \lneqq F(y)\} $\label{stepapp::hatXk_2_add}
						}
					}
				}
			}\label{stepapp::end_for}
			$\bar{X}^{l(k)} \in \argmin\limits_{X \in \{\hat{X}^k\}\cup\{X^k, X^{k-1},\ldots, X^{k - \min\{k, M-1\}}\}}V_F(X)$\label{stepapp::barXlk}\\
			$X^{k+1} = \{x \in \hat{X}^k \cup \bar{X}^{l(k)}\mid\nexists y \in \hat{X}^k \cup \bar{X}^{l(k)}: F(y) \lneqq F(x)\}$\label{stepapp::Xk+1}\\
			$k=k+1$\\
		}
		\Return{$X^k$}
	\end{algorithm}
	
	\section{Supplementary Mathematical Proofs}
	
	In this section, we report mathematical proofs which did not find space in the main body of the manuscript. For convenience, wherever possible, we will use the supplementary material numbering, indicating in parentheses the corresponding one used in the main paper.
	
	\begin{corollary}[{Corollary 1 in the paper}]
		\label{corapp::induct_prop_features_C}
		Let Assumption 2 hold and $X^0$ be a set of mutually nondominated points. Then, Proposition 2 holds for all iterations $k$ of \texttt{FPD\_NMT}.
	\end{corollary}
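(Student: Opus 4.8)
The plan is to argue by induction on the iteration index $k$ that the hypothesis required by Proposition~\ref{prop::reference_set} is satisfied at iteration $k$; once this is established, the two conclusions of that proposition---namely that $C^k$ is a reference set w.r.t.\ $X^k$, and that $X^{k+1}$ is a set of mutually nondominated points for which $\forall y \in C^k \cup \bar{X}^{l(k)}$ there exists $x \in X^{k+1}$ with $F(x) \le F(y)$---are exactly the assertion of the corollary.

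For the base case $k=0$, I would first note that, by step~\ref{stepapp::C-1} of Algorithm~\ref{alg::FPD_NMT}, $C^{-1} = \emptyset$, and that the argmin in step~\ref{stepapp::Xlk} is taken over the singleton $\{X^0\}$, whence $X^{l(0)} = X^0$. Therefore $C^{-1} \cup X^{l(0)} = X^0$, and the hypothesis of Proposition~\ref{prop::reference_set} at $k=0$ reduces to: $X^0$ is a set of mutually nondominated points (true by assumption) and, for every $y \in X^0$, there exists $x \in X^0$ with $F(x) \le F(y)$ (trivially true, taking $x = y$). Hence Proposition~\ref{prop::reference_set} applies at $k=0$, yielding in particular its ``moreover'' part for $k=0$.

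For the inductive step, I would assume that Proposition~\ref{prop::reference_set} holds at iteration $k-1$. Its ``moreover'' part gives that $X^k$ is a set of mutually nondominated points and that, for every $y \in C^{k-1} \cup \bar{X}^{l(k-1)}$, there exists $x \in X^k$ with $F(x) \le F(y)$. This is the only point where Assumption~\ref{ass::same_set} is used: since $\bar{X}^{l(k-1)} = X^{l(k)}$, the previous statement is verbatim the hypothesis of Proposition~\ref{prop::reference_set} at iteration $k$, so the proposition applies at $k$ as well, closing the induction.

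The argument is essentially bookkeeping on the two index windows of Procedures~\ref{proc::createhatXk} and~\ref{proc::createXk+1}, so I do not anticipate a real obstacle. The one place that needs a little care is the base case: one must check that the degenerate window in step~\ref{stepapp::Xlk} for $k=0$ forces $X^{l(0)} = X^0$, and that the initialization $C^{-1} = \emptyset$ makes $C^{-1} \cup X^{l(0)}$ collapse to $X^0$, so that the hypothesis of Proposition~\ref{prop::reference_set} is satisfied for free at the start of the induction. Everything after that follows immediately from Assumption~\ref{ass::same_set}.
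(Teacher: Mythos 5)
Your proposal is correct and follows essentially the same route as the paper's proof: the base case $k=0$ is handled exactly as in the paper (noting $X^{l(0)}=X^0$ and $C^{-1}=\emptyset$, so the hypothesis of Proposition~2 holds trivially), and the inductive step via the ``moreover'' part of Proposition~2 combined with Assumption~2 ($\bar{X}^{l(k-1)}=X^{l(k)}$) is precisely what the paper leaves as ``straightforwardly follows'', which you simply spell out in more detail.
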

	\begin{proof}
		The assertion follows if the assumptions of Proposition 2 hold at every iteration $k$ of the algorithm. 
		
		When $k=0$, we observe the following: by hypothesis, $X^0$ is a set of mutually nondominated points; by definition of $X^{l(k)}$ at step \ref{stepapp::Xlk} of Algorithm \ref{alg::FPD_NMT} (step 1 of Procedure 1 in the paper), $X^{l(0)} = X^0$; therefore, given that $C^{-1} = \emptyset$, $C^{-1} \cup X^{l(0)} = X^0$. Thus, all the hypotheses of Proposition 2 are satisfied for $k=0$. 
		
		The case of a generic iteration $k > 0$ straightforwardly follows from Assumption 2 and by induction using Proposition 2.
	\end{proof}
	
	\begin{lemma}[{Lemma 2 in the paper}]
		The line searches at steps \ref{stepapp::1_line_search_1var}-\ref{stepapp::1_line_search_end} of Algorithm \ref{alg::FPD_NMT} (steps 2-4 of Procedure 2 in the paper) are well defined.
	\end{lemma}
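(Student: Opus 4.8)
The plan is to show that each Armijo-type line search at steps~\ref{stepapp::1_line_search_1var} and \ref{stepapp::1_line_search_end} terminates after finitely many backtracking steps, so that the maximum defining $\alpha_p^k$ is attained by some $\alpha_0\delta^h$ with $h\in\mathbb{N}$ finite. The entry condition for the line search is $\theta(x_p)<-\sigma_k\le 0$, hence $v(x_p)\neq 0$ and $D(x_p,v(x_p))\le\theta(x_p)<0$, i.e.\ $v(x_p)$ is a genuine common descent direction at $x_p$.

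First I would treat step~\ref{stepapp::1_line_search_end}, which is the classical monotone Armijo-type line search of \cite{Fliege2000}: since $F$ is continuously differentiable and $D(x_p,v(x_p))<0$, the standard argument (e.g.\ \cite[Lemma~4]{Fliege2000}) gives that $F(x_p+\alpha v(x_p))\le F(x_p)+\mathbf{1}\gamma\alpha D(x_p,v(x_p))$ holds for all sufficiently small $\alpha>0$; thus some $\alpha_0\delta^h$ satisfies the condition and the set over which the maximum is taken is nonempty, so $\alpha_p^k$ is well defined. For step~\ref{stepapp::1_line_search_1var}, I would fix one $c\in\hat{X}^k$ with $F(x_p)\le F(c)$, whose existence is guaranteed by the branch condition at step~\ref{stepapp::1_line_search_start}; such a $c$ is a valid candidate for $c_p^k$. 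Then for every $\alpha>0$ for which the classical condition $F(x_p+\alpha v(x_p))\le F(x_p)+\mathbf{1}\gamma\alpha D(x_p,v(x_p))$ holds, we immediately get $F(x_p+\alpha v(x_p))\le F(c)+\mathbf{1}\gamma\alpha D(x_p,v(x_p))$ because $F(x_p)\le F(c)$; hence the nonmonotone condition of step~\ref{stepapp::1_line_search_1var} is satisfied with $c_p^k=c$ for all sufficiently small $\alpha$. Again the admissible set is nonempty and the maximum over $h\in\mathbb{N}$ is attained, so $\alpha_p^k$ is well defined.

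The only subtlety worth flagging is that step~\ref{stepapp::1_line_search_1var} allows $c_p^k$ to vary with $h$ and to be non-unique; but for well-definedness it suffices that \emph{some} admissible pair $(h,c_p^k)$ exists, and the argument above exhibits one with a fixed $c_p^k=c$. I would note that the maximum is then over a nonempty subset of $\{\alpha_0\delta^h\mid h\in\mathbb{N}\}$ bounded above by $\alpha_0$, hence attained. I do not expect any real obstacle here: the proof is essentially the classical line-search well-posedness argument of \cite{Fliege2000}, with the only new ingredient being the trivial monotonicity observation that replacing $F(x_p)$ by a larger reference $F(c)\ge F(x_p)$ on the right-hand side can only make the decrease condition easier to satisfy.
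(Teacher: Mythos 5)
Your argument is correct and coincides with the paper's own proof: both use that the entry condition $\theta(x_p)<-\sigma_k\le 0$ makes $D(x_p,v(x_p))<0$, invoke \cite[Lemma 4]{Fliege2000} to show the classical Armijo condition of step~\ref{stepapp::1_line_search_end} holds for all sufficiently small $\alpha$, and then observe that $F(x_p)\le F(c_p^k)$ makes the nonmonotone condition of step~\ref{stepapp::1_line_search_1var} an immediate consequence with the same fixed $c_p^k$. Your additional remark that the admissible set of stepsizes is nonempty and bounded above by $\alpha_0$, so the maximum is attained, is a harmless and valid elaboration of the same reasoning.
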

	\begin{proof}
		Since $J_F(x_p)v(x_p) \le \mathbf{1}D(x_p, v(x_p))$ and the if condition at step \ref{stepapp::iftheta} (step 7 in the paper) of Algorithm \ref{alg::FPD_NMT} ensures that $\theta(x_p)<-\sigma_k \le 0$, by \cite[Lemma 4]{Fliege2000} there exists $\bar{\alpha} > 0$ such that $F(x_p + \alpha v(x_p)) < F(x_p) + \mathbf{1}\gamma\alpha D(x_p, v(x_p))$, for all $\alpha \in (0, \bar{\alpha}]$. Thus, line search at step \ref{stepapp::1_line_search_end} of Algorithm \ref{alg::FPD_NMT} (step 4 of Procedure 2 in the paper) is eventually satisfied. As for the one at step \ref{stepapp::1_line_search_1var} (step 2 of Procedure 2 in the paper), we have, for all $\alpha \in (0, \bar{\alpha}]$, $F(x_p + \alpha v(x_p)) < F(x_p) + \mathbf{1}\gamma\alpha D(x_p, v(x_p)) \le F(c_p^k) + \mathbf{1}\gamma\alpha D(x_p, v(x_p))$, where $c_p^k$ is well-defined by the if condition at step \ref{stepapp::1_line_search_start} (step 1 of Procedure 2 in the paper). Hence, the proof is complete.
	\end{proof}
	
	\begin{lemma}[{Lemma 3 in the paper}]
		Let Assumption 2 hold, $X^0$ be a set of mutually nondominated points and $k$ be an iteration of \texttt{FPD\_NMT} (Algorithm \ref{alg::FPD_NMT}). The set $\hat{X}^k$ contains nondominated points at any time; thus, step \ref{stepapp::2_line_search} (step 15 in the paper) of Algorithm \ref{alg::FPD_NMT} is always well defined.
	\end{lemma}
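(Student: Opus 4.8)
The plan is to prove, by induction on the modifications that $\hat{X}^k$ undergoes during iteration~$k$, that $\hat{X}^k$ is at all times a finite, non-empty set of mutually non-dominated points; the well-definedness of the line search at step~\ref{stepapp::2_line_search} then follows. For the base case, note that $\hat{X}^k$ is initialized to $C^k$ at step~\ref{stepapp::init_hatXk}: since Assumption~2 holds and $X^0$ is mutually non-dominated, Corollary~\ref{corapp::induct_prop_features_C} guarantees that $C^k$ is a reference set w.r.t.\ $X^k$, hence a set of mutually non-dominated points by definition; it is moreover finite and non-empty, as $X^0$ is finite and non-empty and $C^k$ is assembled from $X^{l(k)}$, $C^{k-1}$ and $X^k$ through steps~\ref{stepapp::Xlk}--\ref{stepapp::Ck}.

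For the inductive step I would use that each update of $\hat{X}^k$, at step~\ref{stepapp::hatXk_1_add} and at step~\ref{stepapp::hatXk_2_add}, has the form ``add a point $w$, then delete every $y \in \hat{X}^k$ with $F(w) \lneqq F(y)$''. Such an operation preserves mutual non-dominance whenever $w$ is not dominated by any point currently in $\hat{X}^k$: the surviving old points stay mutually non-dominated, none of them is dominated by $w$ (those were just removed), and $w$ is not dominated by any survivor. Thus it suffices to check that the inserted point is non-dominated w.r.t.\ the current $\hat{X}^k$. For step~\ref{stepapp::hatXk_1_add}: if $\theta(x_p) \ge -\sigma_k$, then $\alpha_p^k = 0$ and $z_p^k = x_p$, which is non-dominated in $\hat{X}^k$ by the if-condition at step~\ref{stepapp::nondominance_xc}; otherwise a line search is run, well posed by Lemma~2 in the paper and hence returning $\alpha_p^k > 0$, and since $D(x_p, v(x_p)) \le \theta(x_p) < 0$ and $\gamma\alpha_p^k > 0$ we get either $F(z_p^k) < F(x_p)$ (branch at step~\ref{stepapp::1_line_search_end}) or $F(z_p^k) < F(c_p^k)$ for the chosen $c_p^k \in \hat{X}^k$ (branch at step~\ref{stepapp::1_line_search_1var}); in the first case any $y$ with $F(y) \lneqq F(z_p^k)$ would satisfy $F(y) \lneqq F(x_p)$, contradicting step~\ref{stepapp::nondominance_xc}, and in the second it would satisfy $F(y) \lneqq F(c_p^k)$, contradicting the inductive hypothesis. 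For step~\ref{stepapp::hatXk_2_add}: by definition of $\alpha_p^I$ at step~\ref{stepapp::2_line_search}, the inserted point $z_p^k + \alpha_p^I v^I(z_p^k)$ is strictly smaller than every $y \in \hat{X}^k$ in at least one objective, so no $y \in \hat{X}^k$ dominates it.

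It remains to verify that the maximum defining $\alpha_p^I$ at step~\ref{stepapp::2_line_search} is attained. This step is reached only when $z_p^k \in \hat{X}^k$ (inner if-condition) and $\theta^I(z_p^k) < 0$ (for-loop condition), and by the induction above $\hat{X}^k$ is finite and mutually non-dominated, so no $y \in \hat{X}^k$ dominates $z_p^k$; hence, for each $y \in \hat{X}^k$, either $f_j(y) > f_j(z_p^k)$ for some $j$, or $F(y) = F(z_p^k)$ (in particular for $y = z_p^k$). In the former case continuity of $F$ gives $f_j(z_p^k + \alpha v^I(z_p^k)) < f_j(y)$ for all sufficiently small $\alpha > 0$; in the latter, $v^I(z_p^k)$ being a common descent direction for $\{f_j\}_{j \in I}$ since $\theta^I(z_p^k) < 0$, we get $f_j(z_p^k + \alpha v^I(z_p^k)) < f_j(z_p^k) = f_j(y)$ for every $j \in I$ and all sufficiently small $\alpha > 0$. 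Taking the smallest of these thresholds over the finitely many $y \in \hat{X}^k$ yields $\bar{\alpha} > 0$ for which the condition in the maximum holds whenever $\alpha_0\delta^h \le \bar{\alpha}$; since $\alpha_0\delta^h$ strictly decreases in $h$, the resulting non-empty admissible set attains its maximum, and $\alpha_p^I$ is well defined.

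The main obstacle is the case distinction at step~\ref{stepapp::hatXk_1_add}, particularly its nonmonotone branch: there the contradiction must be drawn not against the non-dominance test at step~\ref{stepapp::nondominance_xc} but against the inductive hypothesis that $\hat{X}^k$ is mutually non-dominated, exploiting that the benchmark $c_p^k$ lies in $\hat{X}^k$ and is strictly dominated by $z_p^k$. The remaining ingredients — that filtering preserves mutual non-dominance, and the continuity-and-descent estimate for the line search — are routine.
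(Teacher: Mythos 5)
Your proof is correct and follows essentially the same argument as the paper: $\hat{X}^k$ starts as the reference set $C^k$ (mutually nondominated by Corollary~\ref{corapp::induct_prop_features_C}), and each insertion at steps~\ref{stepapp::hatXk_1_add} and~\ref{stepapp::hatXk_2_add} adds a nondominated point (via the same three-case analysis: $z_p^k=x_p$, $z_p^k$ dominating $c_p^k$, or $z_p^k$ dominating $x_p$) while filtering removes the newly dominated ones. The only difference is that you prove the attainability of $\alpha_p^I$ at step~\ref{stepapp::2_line_search} directly via the continuity/descent argument, whereas the paper delegates this final step to \cite[Proposition 3.2]{LAPUCCI2023242}.
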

	\begin{proof}
		At iteration $k$, the set $\hat{X}^k$ is initialized with the points in $C^k$, which, by Corollary \ref{corapp::induct_prop_features_C} (Corollary 1 in the paper), Proposition 2 and the definition of reference set (Definition 3), are all mutually non-dominated. The set $\hat{X}^k$ is subsequently updated exclusively at steps \ref{stepapp::hatXk_1_add} and \ref{stepapp::hatXk_2_add} (steps 12 and 16 in the paper) of Algorithm \ref{alg::FPD_NMT}. In the former step, three cases can occur: (i) $z^k_p = x_p$, which was guaranteed to be nondominated by the if-condition at step \ref{stepapp::nondominance_xc} (step 6 in the paper); (ii) $\alpha^k_p$ is defined at step \ref{stepapp::1_line_search_1var} of Algorithm \ref{alg::FPD_NMT} (step 2 of Procedure 2 in the paper); in this case, $z^k_p$ dominates $c_p^k \in \hat{X}^k$, which was nondominated as it was not filtered out by earlier executions of steps \ref{stepapp::hatXk_1_add} and \ref{stepapp::hatXk_2_add} (steps 12 and 16 in the paper); (iii) $\alpha^k_p$ is defined at step \ref{stepapp::1_line_search_end} of Algorithm \ref{alg::FPD_NMT} (step 4 of Procedure 2 in the paper); in this case, $z^k_p$ dominates $x_p$, which was again nondominated. Thus, regardless of the case, the added point $z^k_p$ is nondominated, and all the newly dominated points are removed. At step \ref{stepapp::hatXk_2_add} (step 16 in the paper), the new point $z^k_p + \alpha^I_pv^I(z^k_p)$ is nondominated by definition of $\alpha^I_p$; all the newly dominated points are once again removed. Thus, $\hat{X}^k$ always contains mutually nondominated solutions. Thus, following an identical proof as for \cite[Proposition 3.2]{LAPUCCI2023242}, we get that step \ref{stepapp::2_line_search} (step 15 in the paper) of Algorithm \ref{alg::FPD_NMT} is always well defined.
	\end{proof}
	
	\begin{lemma}[{Lemma 4 in the paper}]
		\label{lemapp::features_C}
		Let Assumption 2 hold, $X^0$ be a set of mutually nondominated points and $k$ be an iteration of \texttt{FPD\_NMT}. Then: 
		\begin{enumerate}
			\item[(a)] $V_F(C^k) \ge \max(V_F(X^{l(k)}), V_F(C^{k-1}))$;
			\item[(b)] $\forall y \in C^k$, $\forall \tilde{k} \ge k$, $\exists x \in X^{\tilde{k}}$ such that $F(x) \le F(y)$.
		\end{enumerate} 
	\end{lemma}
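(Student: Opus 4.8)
The plan is to treat the two claims separately.

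For part (a) the strategy is to reduce everything to three elementary properties of the dominated-region map: it is monotone under inclusion of the underlying point set; it is unchanged by deletion of dominated points (for finite sets); and $\Lambda_F(A\cup B)=\Lambda_F(A)\cup\Lambda_F(B)$. Since steps \ref{stepapp::hatCk}--\ref{stepapp::Ck} of Algorithm \ref{alg::FPD_NMT} give $C^k\supseteq\bar C^k$ with $\bar C^k$ equal to the nondominated subset of $X^{l(k)}\cup C^{k-1}$, the first two properties yield $\Lambda_F(C^k)\supseteq\Lambda_F(\bar C^k)=\Lambda_F(X^{l(k)}\cup C^{k-1})$, and the third gives $\Lambda_F(X^{l(k)}\cup C^{k-1})=\Lambda_F(X^{l(k)})\cup\Lambda_F(C^{k-1})$, which contains both $\Lambda_F(X^{l(k)})$ and $\Lambda_F(C^{k-1})$; passing to Lebesgue measures proves (a). I would also remark that the hyper-volume reference point $r_p$ is componentwise nondecreasing in $k$, since the set $X_{\mathrm{all}}$ defining it only grows, so the inclusions above may be read with the iteration-$k$ reference point and the stated inequalities hold for the reference points actually used; in particular $\{V_F(C^k)\}$ is monotone nondecreasing.

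For part (b) I would first record an auxiliary ``non-worsening'' property of the reference sets: for every $j\ge 1$ and every $z\in C^{j-1}$ there is $w\in C^j$ with $F(w)\le F(z)$. Indeed $z\in X^{l(j)}\cup C^{j-1}$, a finite point set, which therefore contains a minimal---hence nondominated---element $w$ with $F(w)\le F(z)$, and by step \ref{stepapp::hatCk} of Algorithm \ref{alg::FPD_NMT} such $w$ belongs to $\bar C^j\subseteq C^j$. Iterating this from $j=k+1$ up to $j=\tilde k-1$, for any fixed $y\in C^k$ and any $\tilde k>k$ I obtain $w\in C^{\tilde k-1}$ with $F(w)\le F(y)$. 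Then, invoking Proposition 2---which holds at every iteration by Corollary 1 under Assumption 2---its second part applied at iteration $\tilde k-1$ furnishes $x\in X^{\tilde k}$ with $F(x)\le F(w)\le F(y)$. The remaining case $\tilde k=k$ is immediate: by Corollary 1 the set $C^k$ is a reference set with respect to $X^k$, so property (b) of Definition 3 gives the required $x\in X^k$.

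The main obstacle I anticipate is that, in the nonmonotone regime, one cannot claim that $X^{\tilde k+1}$ dominates $X^{\tilde k}$ pointwise: the descent point $z_p^{\tilde k}$ may increase some objective values relative to $x_p$, being only required to improve upon a benchmark $c_p^{\tilde k}\in\hat X^k$. This blocks the naive induction ``$X^{\tilde k}$ is covered by $X^{\tilde k+1}$''. Routing the induction through the reference sets $C^{\tilde k}$, which do enjoy the non-worsening property above, and using Proposition 2 only for the single jump from $C^{\tilde k-1}$ to $X^{\tilde k}$, is what circumvents the issue. A secondary point requiring care is keeping all arguments within finite point sets---so that minimal elements exist and deleting dominated points does not shrink $\Lambda_F$---together with the reference-point monotonicity noted in part (a).
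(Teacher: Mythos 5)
Your proof is correct and follows essentially the same route as the paper: the key ``non-worsening'' property that every point of $C^{k-1}$ (and of $X^{l(k)}$) is covered by a point of $\bar C^k\subseteq C^k$, dominated-region arguments for the hyper-volume inequalities in (a), and for (b) chaining that property across iterations before a final jump from the reference set to $X^{\tilde k}$ via Proposition 2/Corollary 1. Your presentation of (a) through the identities $\Lambda_F(\bar C^k)=\Lambda_F(X^{l(k)}\cup C^{k-1})=\Lambda_F(X^{l(k)})\cup\Lambda_F(C^{k-1})$ and the explicit remark on the nondecreasing reference point are only minor (and sound) variations on the paper's argument.
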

	\begin{proof}
		First, we prove that, for all $y \in C^{k-1}$, there exists $x \in C^k$ such that $F(x) \le F(y)$, which is a crucial property to prove the two theses. Note that the case $k=0$ is trivial by definition of $C^{-1}$ (step \ref{stepapp::C-1} of Algorithm \ref{alg::FPD_NMT})
		
		Let us consider then a point $y \in C^{k-1}$, with $k > 0$. By step \ref{stepapp::hatCk} of Algorithm \ref{alg::FPD_NMT} (step 2 of Procedure 1 in the paper), we have two cases: $y \in C^{k-1} \cap \bar{C}^k$ or $y \not\in C^{k-1} \cap \bar{C}^k$. In the first case, since, by step \ref{stepapp::Ck} of Algorithm \ref{alg::FPD_NMT} (step 3 of Procedure 1 in the paper), $\bar{C}^k \subseteq C^k$, we get that $F(x) \le F(y)$, with $x = y \in C^k$. In the second one, by step \ref{stepapp::hatCk} of Algorithm \ref{alg::FPD_NMT} (step 2 of Procedure 1 in the paper) we know  that there exists $x \in X^{l(k)} \cap \bar{C}^k$ such that $F(x) \lneqq F(y)$. Recalling again that $\bar{C}^k \subseteq C^k$, it holds that $x \in C^k$. Thus, we have proved that, for all $y \in C^{k-1}$, there exists $x \in C^k$ such that $F(x) \le F(y)$.
		
		Now, let us prove the two properties one at time.
		\begin{enumerate}
			\item[(a)] By previous result, Proposition 2 and the definition of reference set (Definition 3), we know that $(F(C^{k} \cup C^{k-1}))_{nd} = (F(C^{k}))_{nd} = F(C^{k})$. Moreover, we observe that $C^{k-1} \subseteq C^{k-1} \cup C^k$ implies that $\Lambda_F(C^{k-1}) \subseteq \Lambda_F(C^{k-1} \cup C^k)$. By the properties of the dominated region (Definition 1), we then have $\Lambda_F(C^k) = \Lambda((F(C^k \cup C^{k-1}))_\text{nd}) = \Lambda_F(C^k \cup C^{k-1}) \supseteq \Lambda_F(C^{k-1})$. Thus, we can also write that $V_F(C^k) \ge V_F(C^{k-1})$.
			Note that, following reasoning similar to that at the beginning of the proof, we can prove that, for all $y \in X^{l(k)}$, there exists $x \in C^k$ such that $F(x) \le F(y)$. Thus, using a similar argument as the one presented here, we can obtain that $V_F(C^k) \ge V_F(X^{l(k)})$. By combining this result with $V_F(C^k) \ge V_F(C^{k-1})$, we conclude the proof.
			
			\item[(b)] By Corollary \ref{corapp::induct_prop_features_C} (Corollary 1 in the paper) and definition of the reference set, for all $y \in C^{\tilde{k}}$, there exists $x \in X^{\tilde{k}}$ such that $F(x) \le F(y)$. If $\tilde{k} = k$, the proof is thus complete; otherwise, by applying an argument analogous to that used at the end of the proof of Proposition 2, we can combine this result with the property established at the beginning of the proof and proceed inductively to obtain the claim.
		\end{enumerate}
	\end{proof}
	
	\begin{lemma}[{Lemma 5 in the paper}]
		\label{lemapp::better_y}
		After step \ref{stepapp::hatXk_1_add} (step 12 in the paper) of \texttt{FPD\_NMT} (Algorithm \ref{alg::FPD_NMT}), $z_p^k$ belongs to $\hat{X}^k$. Moreover, there exists $y \in X^{k+1}$ such that $F(y) \le F(z_p^k)$.
	\end{lemma}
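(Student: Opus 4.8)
The plan is to treat the two assertions separately: the first is immediate from the update rule at step \ref{stepapp::hatXk_1_add}, while the second requires tracking how $\hat{X}^k$ evolves during the remainder of iteration $k$ and then how $X^{k+1}$ is extracted from it.

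For the first assertion, I would simply observe that step \ref{stepapp::hatXk_1_add} replaces $\hat{X}^k$ by $(\hat{X}^k \cup \{z_p^k\}) \setminus \{y \in \hat{X}^k \mid F(z_p^k) \lneqq F(y)\}$. Since $F(z_p^k) \lneqq F(z_p^k)$ fails (the relation $\lneqq$ requires $F(z_p^k) \neq F(z_p^k)$), the point $z_p^k$ is never filtered out, and it obviously lies in the union; hence $z_p^k \in \hat{X}^k$ right after step \ref{stepapp::hatXk_1_add}.

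For the second assertion, the key remark is that, once $z_p^k$ has been inserted at step \ref{stepapp::hatXk_1_add}, the set $\hat{X}^k$ is subsequently altered only by further executions of steps \ref{stepapp::hatXk_1_add} and \ref{stepapp::hatXk_2_add} (while finishing the subsets $I$ for $x_p$, or while processing later points $x_{p'}\in X^k$), and each such update inserts a single new point $w$ and deletes only points $y$ with $F(w) \lneqq F(y)$, so in particular $F(w) \le F(y)$. I would then run a finite descent chain: if $z_p^k$ is still in $\hat{X}^k$ at the end of the for loop of steps \ref{stepapp::start_for}--\ref{stepapp::end_for}, set $w := z_p^k$; otherwise $z_p^k$ is deleted by the insertion of some $w_1$ with $F(w_1) \le F(z_p^k)$, and repeating this with $w_1$ in place of $z_p^k$ produces $w_2, w_3, \ldots$, each produced at a strictly later update than its predecessor and satisfying $F(w_{i+1}) \le F(w_i)$. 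Since only finitely many insertions take place during iteration $k$ --- the for loop of steps \ref{stepapp::start_for}--\ref{stepapp::end_for} ranges over the finite set $X^k$, the inner loop over subsets $I$ of $\{1,\ldots,m\}$ is finite, and $\hat{X}^k$ is initialized with the (finite) set $C^k$ --- the chain must terminate at a point $w$ that survives in $\hat{X}^k$ until the end of the for loop, with $F(w) \le F(z_p^k)$.

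It remains to pass from $\hat{X}^k$ to $X^{k+1}$. By steps \ref{stepapp::barXlk}--\ref{stepapp::Xk+1}, $X^{k+1}$ is the set of nondominated points of $\hat{X}^k \cup \bar{X}^{l(k)} \supseteq \hat{X}^k$; arguing exactly as at the end of the proof of Proposition 2 in the paper (chasing dominators within this finite set until a nondominated one is reached), every point of $\hat{X}^k \cup \bar{X}^{l(k)}$ is weakly dominated by some point of $X^{k+1}$. Applying this to $w$ yields $y \in X^{k+1}$ with $F(y) \le F(w) \le F(z_p^k)$, as claimed. The only mildly delicate point, and the main obstacle, is making the descent chains rigorous: one needs the finiteness of the sets manipulated at iteration $k$ together with the strictness of $\lneqq$ at each deletion to guarantee termination.
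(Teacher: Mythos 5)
Your proof is correct and takes essentially the same route as the paper, whose own proof simply delegates to Lemma 3.1 of \cite{LAPUCCI2023242}: $z_p^k$ survives its own insertion because $\lneqq$ is irreflexive, and a finite chain of dominating insertions through the updates at steps 12 and 16, followed by the filtering in Procedure 3, yields a point of $X^{k+1}$ weakly dominating $z_p^k$. You merely spell out explicitly the chain/finiteness argument that the cited lemma encapsulates.
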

	\begin{proof}
		The result follows as in \cite[Lemma 3.1]{LAPUCCI2023242} with $\tilde{k} = k+1$, recalling that the set $X^{k+1}$ is the result of repeated application of steps \ref{stepapp::hatXk_1_add} and \ref{stepapp::hatXk_2_add} (steps 12 and 16 in the paper) and of the execution of step \ref{stepapp::Xk+1} (step 2 of Procedure 3 in the paper), starting from $\hat{X}^k$ at some point when $z^k_p \in \hat{X}^k$.
	\end{proof}
	
	\section{Supplementary Numerical Results}
	
	In Figure \ref{fig:preliminary}, we report the performance profiles w.r.t.\ purity, hyper-volume, $N_f^m$ and $\alpha^m$ obtained by \texttt{FPD\_NMT} with $M \in \{2, 4, 20\}$ on a subset of problems: CEC09\_1, CEC09\_3 \cite{zhang_multiobjective_2009}, JOS\_1 \cite{jin2001dynamic}, MAN \cite{lapucci2022memetic}, ZDT\_1 and ZDT\_3 \cite{Zitzler2000} with $n \in \{5, 50, 500, 5000\}$. Additional details on the experimental settings are provided in Section 4 of the paper. 
	
	\texttt{FPD\_NMT} with $M=4$ clearly outperformed the other variants in terms of purity and hyper-volume. For $N_f^m$, the three methods showed similar performance, while the performance profiles for $\alpha^m$ indicate a clear advantage for the variant with $M=20$. This aligns with the expectations: increasing the memory parameter $M$ makes the sufficient decrease condition in the nonmonotone line search at step \ref{stepapp::1_line_search_1var} of Algorithm \ref{alg::FPD_NMT} (step 2 of Procedure 2 in the paper) easier to satisfy with fewer iterations, leading to larger step sizes. A similar trend was observed in the line search at step \ref{stepapp::2_line_search} (step 15 in the paper) of Algorithm \ref{alg::FPD_NMT}. However, this behavior does not translate into superior performance for purity and hyper-volume. 
	
	Notably, \texttt{FPD\_NMT} with $M=4$ remained the second most robust approach with respect to $\alpha^m$. Its overall performance motivated our choice of this variant for the comparisons presented in the paper.
	
	\begin{figure*}
		\centering
		\subfloat{\includegraphics[width=0.25\textwidth]{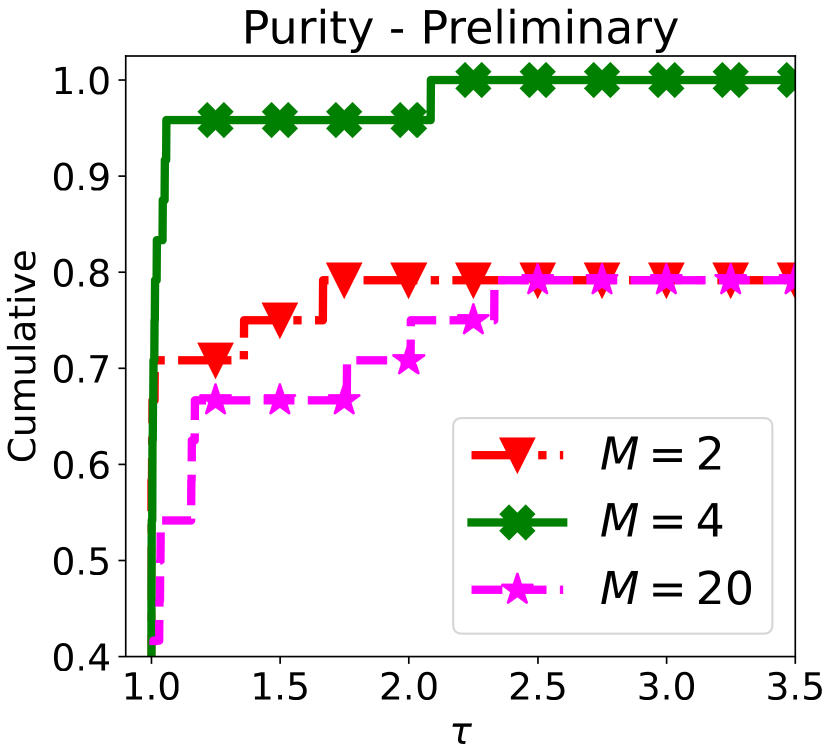}}
		\hfil
		\subfloat{\includegraphics[width=0.25\textwidth]{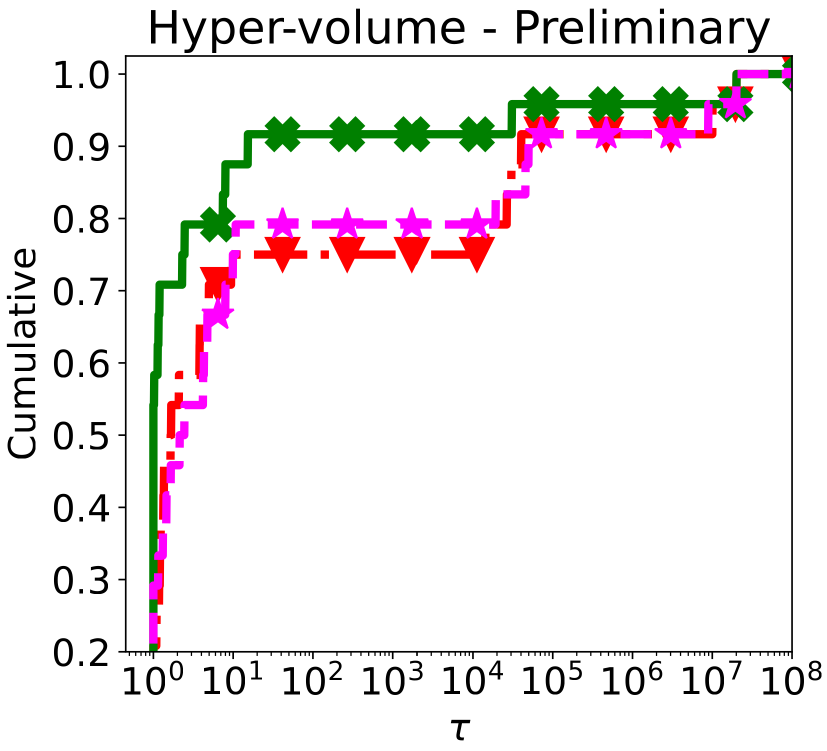}}
		\hfil
		\subfloat{\includegraphics[width=0.25\textwidth]{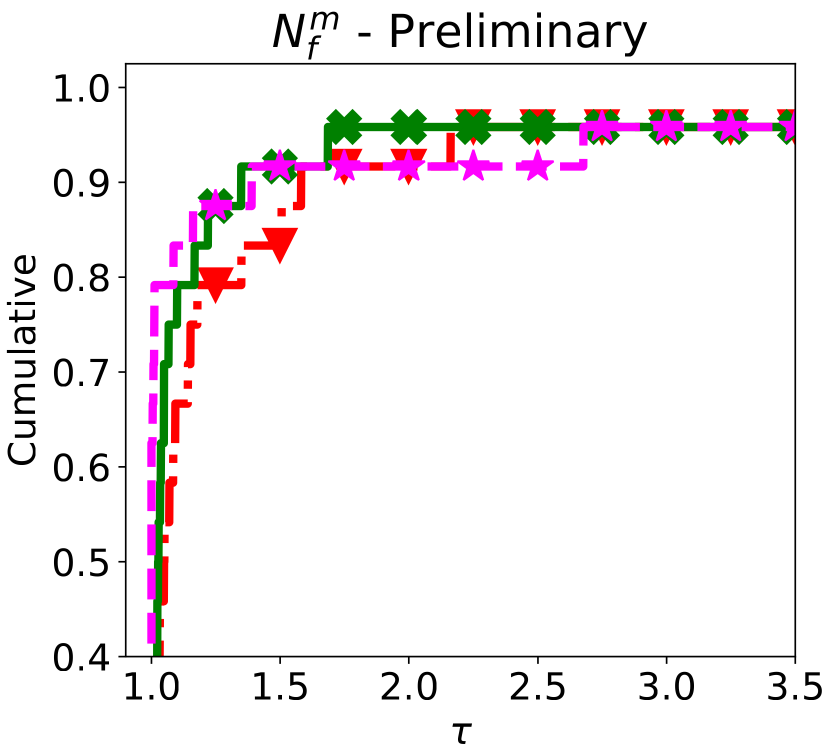}}
		\hfil
		\subfloat{\includegraphics[width=0.25\textwidth]{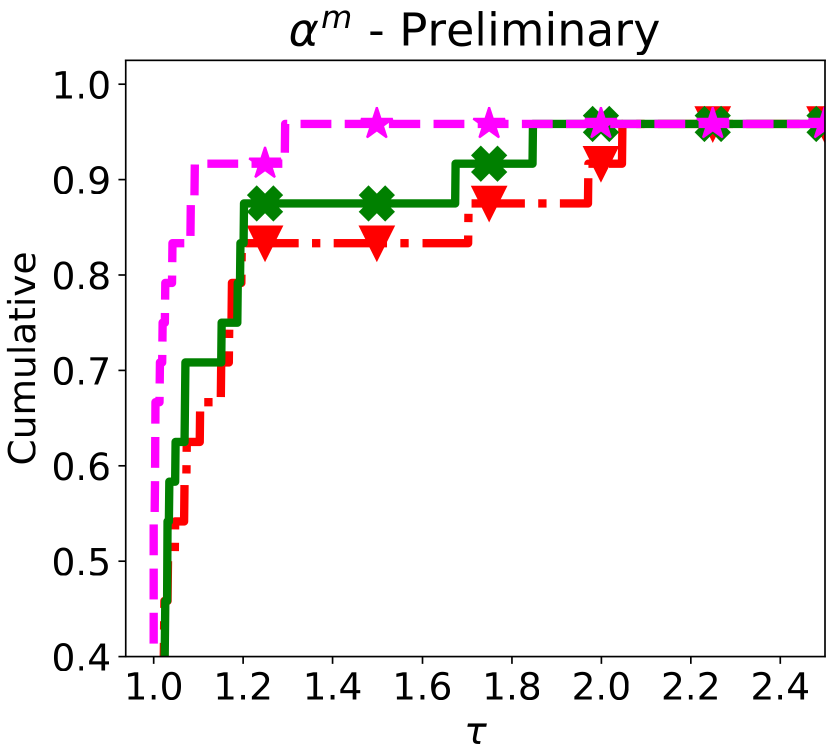}}
		\caption{Performance profiles w.r.t.\ purity, hyper-volume, $N_f^m$ and $\alpha^m$ obtained by \texttt{FPD\_NMT} with $M \in \{2, 4, 20\}$ on a subset of problems.}
		\label{fig:preliminary}
	\end{figure*}
	
	In Figure \ref{figapp:all}, we show the performance profiles for
	\texttt{FPD} and \texttt{FPD\_NMT} w.r.t. purity, hyper-volume, $N_f^m$ and $\alpha^m$ across the full problem benchmark (first row), the low-dimensional problems ($n \le 30$; second row) and the high-dimensional problems ($n > 30$; third row). The results for the full benchmark and the high-dimensional problems were already presented in Figure 2 of the paper; we include them here again for a more direct comparison with the performance profiles on the low-dimensional problems. A discussion of these results is provided in Section 4 of the paper.
	
	\begin{figure*}
		\centering
		\subfloat{\includegraphics[width=0.25\textwidth]{Purity_all.pdf}}
		\hfil
		\subfloat{\includegraphics[width=0.25\textwidth]{HV_all.pdf}}
		\hfil
		\subfloat{\includegraphics[width=0.25\textwidth]{Nf_mean_all.pdf}}
		\hfil
		\subfloat{\includegraphics[width=0.25\textwidth]{Alpha_mean_all.pdf}}
		\\
		\subfloat{\includegraphics[width=0.25\textwidth]{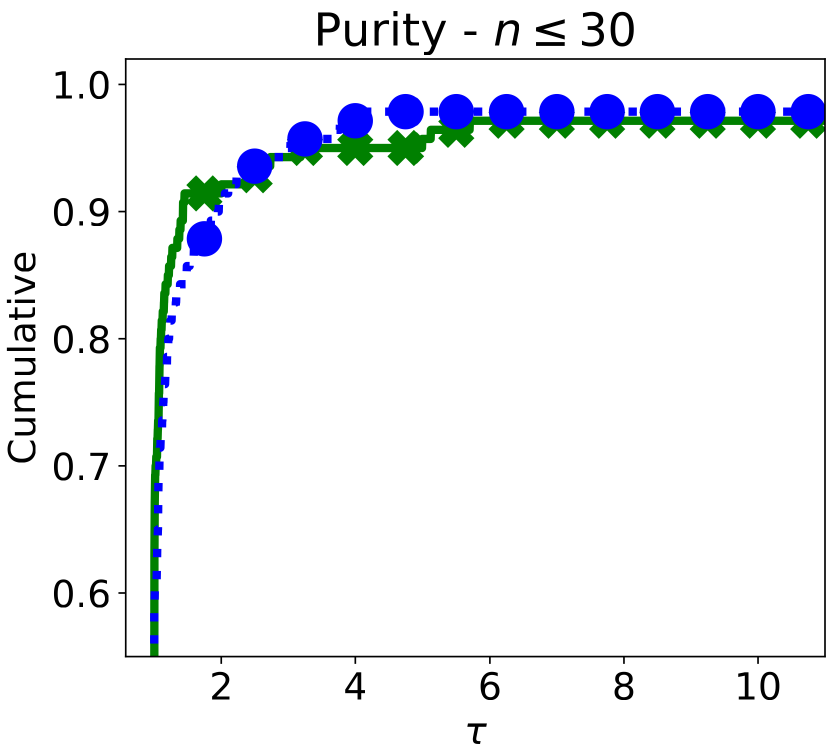}}
		\hfil
		\subfloat{\includegraphics[width=0.25\textwidth]{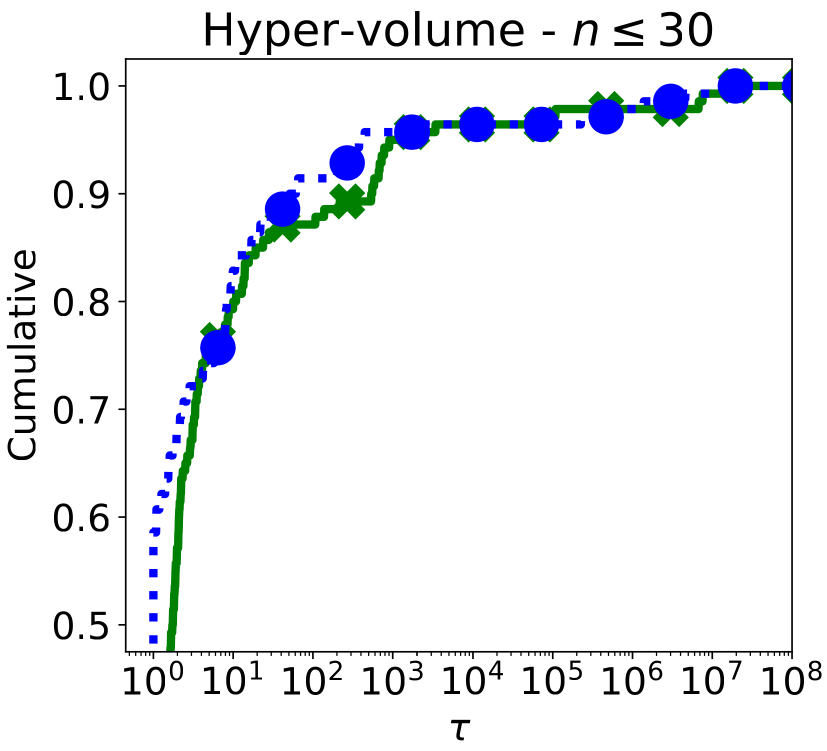}}
		\hfil
		\subfloat{\includegraphics[width=0.25\textwidth]{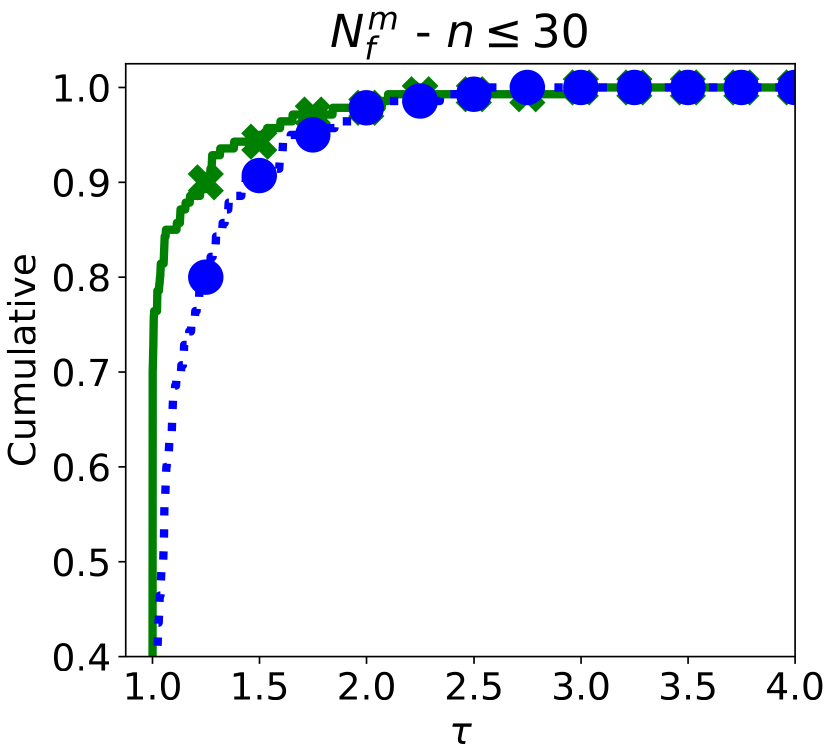}}
		\hfil
		\subfloat{\includegraphics[width=0.25\textwidth]{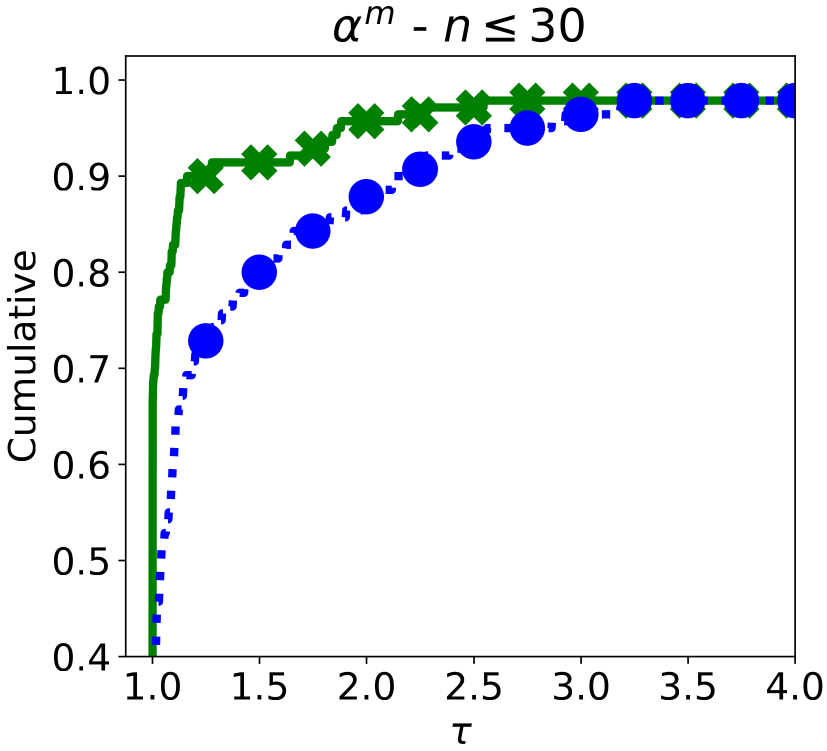}}
		\\
		\subfloat{\includegraphics[width=0.25\textwidth]{Purity_high.pdf}}
		\hfil
		\subfloat{\includegraphics[width=0.25\textwidth]{HV_high.pdf}}
		\hfil
		\subfloat{\includegraphics[width=0.25\textwidth]{Nf_mean_high.pdf}}
		\hfil
		\subfloat{\includegraphics[width=0.25\textwidth]{Alpha_mean_high.pdf}}
		\caption{Performance profiles for \texttt{FPD} and \texttt{FPD\_NMT} w.r.t.\ purity, hyper-volume, $N_f^m$ and $\alpha^m$. First row: full problem benchmark; second row: low-dimensional problems ($n \le 30$); third row: high-dimensional problems ($n > 30$).}
		\label{figapp:all}
	\end{figure*}
}

\section*{Funding}
No funding was received for conducting this study.

\section*{Conflict of interest}
The author declares that he has no conflict of interest.

\section*{Code Availability Statement}
The source code of the \texttt{FPD\_NMT} algorithm can be
found at \href{https://github.com/pierlumanzu/fpd_nmt}{github.com/pierlumanzu/fpd\_nmt}.

\section*{Data Availability Statement}
Data sharing not applicable to this article
as no datasets were generated or analyzed during the current study.

\bibliographystyle{abbrv}

\end{document}